\documentclass{amsart}

\input{StandardPaper2.tex}
\begin{document}

\title[Ancient solutions to curvature flows in the sphere]
 {On the classification of ancient solutions to curvature flows on the sphere}

\author[P. Bryan]{Paul Bryan}
\address{Mathematics Institute, University of Warwick
Coventry, CV4 7AL, England}
\email{p.bryan@warwick.ac.uk}
\author[M.N. Ivaki]{Mohammad N. Ivaki}
\address{Institut f\"{u}r Diskrete Mathematik und Geometrie, Technische Universit\"{a}t Wien,
Wiedner Hauptstr. 8--10, 1040 Wien, Austria}
\email{mohammad.ivaki@tuwien.ac.at}
\author[J. Scheuer]{Julian Scheuer}
\address{Albert-Ludwigs-Universit\"{a}t,
Mathematisches Institut, Eckerstr. 1, 79104
Freiburg, Germany}
\email{julian.scheuer@math.uni-freiburg.de}
\date{\today}

\dedicatory{}
\keywords{Spherical geometry, Fully nonlinear curvature flows, Ancient solutions}

\begin{abstract}
We consider the evolution of hypersurfaces on the unit sphere $\mathbb{S}^{n+1}$ by smooth functions of the Weingarten map. We introduce the notion of `quasi-ancient' solutions for flows that do not admit non-trivial, convex, ancient solutions. Such solutions are somewhat analogous to ancient solutions for flows such as the mean curvature flow, or 1-homogeneous flows. The techniques presented here allow us to prove that any convex, quasi-ancient solution of a curvature flow which satisfies a backwards in time uniform bound on mean curvature must be stationary or a family of shrinking geodesic spheres. The main tools are geometric, employing the maximum principle, a rigidity result in the sphere and an Aleksandrov reflection argument. We emphasize that no homogeneity or convexity/concavity restrictions are placed on the speed, though we do also offer a short classification proof for several such restricted cases.
\end{abstract}

\maketitle

\section{Introduction}
\label{sec:intro}

We consider the evolution of a closed, convex hypersurfaces $M^n$ by
\eq{\label{eq:CurvFlow}
\partial_tx=-F\nu,~ x\cn(-T,0)\times M^n\to \S^{n+1},
}
where \(\S^{n+1}\) is equipped with the round metric of constant curvature $1,$ $\nu$ is the outward unit normal to $M_t=x(t,M^n)$ and $F$ is a function of the principal curvatures of $M_t.$ Whenever we refer to \eqref{eq:CurvFlow}, $F$ will be understood to have the following properties without further mentioning.
\begin{ass} \label{F}
Let $\G_{+}=\{(\k_i)\in\R^n\cn \k_i>0,~ i=1,\ldots,n\}.$ Assume that $F\in C^{\8}(\G_{+}) \cap C^{0}(\-\G_{+})$ is
\begin{enumerate}[(i)]
\item{symmetric},
\item{positive on $\G_{+}$ with $F(0)=0$}  and
\item{increasing, i.e., for all $1\leq i\leq n$ we have
$\fr{\del F}{\del \k_i}\geq 0.$}
\end{enumerate}
\end{ass}
We emphasize that no homogeneity, convexity or concavity restrictions are placed on the speed - all we require is that the speed is parabolic, positive and symmetric (equivalently, the speed is invariant under isometries). It \emph{shrinks} strictly convex hypersurfaces. The assumption $F(0)=0$ ensures that equators are static solutions and that the choice of an \emph{outward} unit normal makes sense for all nontrivial solutions.

We consider \emph{quasi-ancient}, convex solutions of \eqref{eq:CurvFlow}. Here quasi-ancient refers to solutions existing on the same time interval $(-T,0)$ as the maximal flow of strictly convex geodesic spheres, compare \cref{Quasi}. For many common flows, such as the mean curvature flow \(F(\mathcal{W}) = \text{Trace}(\mathcal{W}) = H\), the maximal interval of a flow of geodesic spheres is the infinite time interval \((-\infty, 0)\). However, for flows \(F = H^p\) with \(p \in (0,1)\), the maximal time interval is in fact bounded; see \Cref{quasi}.

The following theorem is the main result of the paper.
\begin{thm}
\label{thm:main}
Let \(M_t\) be a smooth, quasi-ancient, convex solution of \eqref{eq:CurvFlow} where $F$ satisfies \cref{F} and such that
\[\limsup_{t\rightarrow -T}\max_{M_t}H<\infty.\] Suppose that either
\begin{enumerate}[(i)]
\item\label{main1} $M_t$ is strictly convex for all $t \in (-T, 0)$, or
\item \label{main2}$F\in C^1(\bar\Gamma_{+}).$
\end{enumerate}
\noindent
Then $M_t$ is a flow of geodesic spheres.
\end{thm}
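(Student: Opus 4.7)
The plan is to combine comparison with geodesic sphere solutions and a spherical Aleksandrov reflection argument to force each $M_t$ to inherit the full rotational symmetry of a single geodesic sphere. First, I would use the avoidance principle to sandwich $M_t$ between inner and outer geodesic sphere solutions $S_t^\pm$ centred at a common point $c\in\S^{n+1}$, chosen so that $S_0^-\subset\O_0\subset S_0^+$, where $\O_t$ is the convex body enclosed by $M_t$. The quasi-ancient hypothesis, applied in conjunction with such sandwich comparisons on intervals exhausting $(-T,0)$, together with the $C^{1,\alpha}$ compactness provided by convexity and $\limsup_{t\to -T}\max_{M_t}H<\infty$, forces $M_t$ to accumulate as $t\to -T$ on the great equator $E_c=\{x\in\S^{n+1}:\langle x,c\rangle=0\}$ perpendicular to $c$.

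For the reflection step, fix a unit vector $V\in\S^{n+1}$ and consider the totally geodesic hyperspheres $P_V(\delta)$ parallel to $P_V$, with corresponding reflections $R_V(\delta)$. A standard Aleksandrov moving-plane construction adapted to the sphere produces a critical angle $\delta(V,t)\ge 0$ with the property that $R_V(\delta(V,t))$ reflects the cap of $M_t$ beyond $P_V(\delta(V,t))$ onto a set tangentially contained in $\overline{\O_t}$, so that $\delta(V,t)=0$ precisely when $M_t$ is $R_V$-symmetric. Since $R_V(\delta)$ is an isometry of $\S^{n+1}$, the reflected cap is itself a solution of \eqref{eq:CurvFlow}; writing it and the complementary cap of $M_t$ as graphs over $P_V(\delta)$ and subtracting yields a linear parabolic equation whose coefficients are governed by the linearisation $F^{ij}$ of $F$. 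The strong maximum principle and the Hopf boundary lemma at the contact point then imply that $t\mapsto\delta(V,t)$ is non-increasing.

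Combining the two ingredients, for every $W\perp c$ the equator $P_W$ passes through $c$, so $R_W(E_c)=E_c$. The quantitative estimate that if $M_t$ is $\varepsilon$-close in Hausdorff distance to $E_c$ then $R_W(M_t)$ is $2\varepsilon$-close to $M_t$ gives $\delta(W,t)\to 0$ as $t\to -T$; forward monotonicity then propagates $\delta(W,t)\le\lim_{t'\to -T^+}\delta(W,t')=0$ for every $t\in(-T,0)$, so $M_t$ is invariant under $R_W$ for every $W\perp c$. The family $\{R_W:W\perp c\}$ generates the full subgroup of rotations of $\S^{n+1}$ fixing $c$, and the rigidity result in the sphere then concludes that a closed convex hypersurface invariant under this group must be a geodesic sphere centred at $c$.

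I expect the main technical obstacle to be the Aleksandrov monotonicity in case (ii): weak convexity of $M_t$ allows principal curvatures to lie on $\partial\G_+$, where $F^{ij}$ may degenerate. The hypothesis $F\in C^1(\bar{\G}_+)$ is precisely what keeps the coefficients of the comparison equation continuous up to the boundary, preserving the strong maximum principle and Hopf lemma. A secondary subtlety is confirming that the quasi-ancient hypothesis combined with the uniform mean curvature bound really does force accumulation on $E_c$ in a sense strong enough to deploy the closeness estimate used in the third paragraph.
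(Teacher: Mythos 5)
Your two-step outline---establish that the backwards limit is an equator, then propagate reflection symmetry forward via Aleksandrov's method---is precisely the strategy of the paper, but two of the key steps will not go through as written.

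\emph{Backwards limit.} Sandwiching $M_t$ between geodesic sphere barriers does not force the backward limit to be an equator. Sandwich plus compactness only yields that $\hat{M}_{-T}$ is a weakly convex body inside a closed hemisphere; a priori such a body can touch the hemisphere's equator without filling the hemisphere (a lune, for instance). The paper instead invokes the rigidity theorem of Makowski--Scheuer: a weakly convex body in a closed hemisphere satisfying an interior sphere condition at every boundary point on the bounding equator is either the full hemisphere or contained in an open hemisphere. The uniform interior sphere condition is where $\limsup_{t\to -T}\max_{M_t} H<\infty$ is used, via uniform curvature estimates and a Blaschke rolling argument. Your avoidance comparison then only serves---as it does in the paper's footnote---to rule out the second alternative, since a quasi-ancient solution cannot be confined to a geodesic ball of radius bounded away from $\pi/2$ near $t=-T$. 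Without the rigidity ingredient, the leap from ``contained in a closed hemisphere'' to ``equal to the equator'' is unjustified.

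\emph{Aleksandrov monotonicity.} Subtracting the graph equations for the reflected and unreflected caps and linearizing requires evaluating $F^{ij}$ along the interpolating graphs $su+(1-s)w$, which need not be convex; their Weingarten maps may leave $\bar\Gamma_{+}$ entirely, where $F$ is not even defined. The hypothesis $F\in C^1(\bar\Gamma_{+})$ in case (ii) helps only on $\bar\Gamma_{+}$ and does not cure this. The paper circumvents the obstruction with a structural observation: splitting the radial coordinate $z$ off from the remaining graph data, the map $z\mapsto h^i_j(x,z,u,\nabla u,\nabla^2 u)$ is strictly monotone and stays positive definite for all $0<z<u$ whenever $u$ graphs a convex hypersurface, so $F$ is evaluated only inside $\Gamma_{+}$ along the relevant interpolation. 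The maximum principle is then applied to the $e^{-\lambda t}$-weighted difference, with $\lambda$ chosen to dominate $\partial_z\Phi$ on a slab bounded away from $z=0$. It is in choosing $\lambda$ that cases (i) and (ii) diverge: strict convexity in (i) keeps the argument of $F$ uniformly inside $\Gamma_{+}$, while $F\in C^1(\bar\Gamma_{+})$ in (ii) keeps $\partial_z\Phi$ bounded as the argument approaches $\partial\Gamma_{+}$. The naive interpolation in your plan skips over this, which is the heart of the technical content of the reflection step; the Hopf lemma is also unnecessary in the paper's version because the reflected and original graphs agree identically on the boundary equator $V^\perp\cap M_t$.
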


In certain situations, such as 1-homogeneous speeds that are either convex, or concave and comparable to \(H\), the theorem is quite easy to prove, and we offer a short proof in these cases in \Cref{sec:concave_convex_homogeneous}. For instance, this should be compared with the result in \cite[Theorem 6.1]{HuiskenSinestrari:10/2015} where a short proof in the case of the mean curvature flow in the sphere may be found. The main idea is to find the correct quantity to which the maximum principle may be applied. In the paper referred to it is the pinching quantity \((|A|^2 - n H^2)/H^2\) used so often for the mean curvature flow. Here we use \(\mathcal{W}/F\) but also remark that the curvature ball estimates from \cite{AndrewsHanLiWei:/2015} may also be used.

The main thrust of the theorem is that we have very strong rigidity in the sphere, and we can prove the result in great generality - \emph{all we require is weak parabolicity of the flow}. At this level of generality, it is quite difficult to obtain suitable uniform regularity estimates, in particular, Evans-Krylov higher regularity estimates are not known for arbitrary non-linear equations, and hence less direct PDE methods are required. We obtain the theorem by first using the rigidity results of \cite{MakowskiScheuer:/2016} to argue that quasi-ancient solutions with bounded \(H\) must limit to an equator backwards in time. Then the Aleksandrov reflection technique developed in \cite{BryanIvaki:08/2015,BryanLouie:04/2016} applies to show the symmetry is preserved under the flow, completing the classification.

Let us remark that one may seek other methods of classification, using more PDE theoretic techniques, but they tend to suffer the drawback that they only apply (without modification) to speeds \(F\) that are non-singular and non-degenerate on the equator. The methods we describe here apply even in the case of \emph{singular or degenerate speeds}. Very simple examples of such speeds are \(H^p\), which are singular for \(0 < p < 1\) and degenerate for \(p > 1\) whenever \(H = 0\), in particular along the equator. We find that the geometric methods are quite appealing and of interest in their own right, even in cases where PDE methods are applicable, these methods offering a powerful, complementary alternative to direct PDE techniques, and supplanting them in cases where such methods are unknown.

One limitation of our method is that at this stage, we assume a bound on the mean curvature \(H\). Although there are certainly solutions emanating from convex polyhedra (and hence with unbounded \(H\)), it is not at all clear to us whether such solutions can be quasi-ancient. In \Cref{sec: examples}, we give two examples (for non-geometric flows) showing what could go wrong. The first converges to a lune (intersection of hemispheres), with unbounded \(H\) and the second converges to an equator, but is not a family of shrinking spheres. Since these examples are not geometric flows, they are not of the type considered here, but they do show that parabolic flows on the sphere may admit ancient solutions that are not a family of shrinking geodesics spheres.

It is also worth comparing the techniques and results here to the Euclidean case. The latter does not exhibit such strong rigidity, and other ancient, convex solutions may occur. Indeed, for the curve shortening flow in the plane, the classification \cite{DaskalopoulosHamiltonSesum:/2010} states that solutions are either shrinking circles or the Angenent oval (also known as the paper clip solution) \cite{Angenent:/1992}. In higher dimensions a characterization of when ancient, convex mean curvature flows are shrinking spheres have been given \cite{HaslhoferHershkovits:/2016,HuiskenSinestrari:10/2015}, and one generally expects a greater variety of ancient, convex solutions to exist.

Convex, ancient solutions are of interest as they arise as rescalings of singularities, cf.~ \cite{HuiskenSinestrari:01/1999}, of mean convex, mean curvature flow, cf.~\cite{HuiskenSinestrari:09/1999, White:10/2002}. Singularity models for mean curvature flow in the sphere have been studied in \cite{Nguyen:07/2015}, where mean convex singularities with an ambient curvature dependent pinching condition are classified according to a blow-up yielding ancient solutions in Euclidean space.

This paper is laid out as follows: In \Cref{prelim}, we establish some notation and basic equations to be used in this paper. In \Cref{quasi}, we introduce the notion of quasi-ancient solutions and establish some facts about such solutions. In \Cref{sec:concave_convex_homogeneous}, we give a short classification result for 1-homogeneous concave or convex speeds and for the curve shortening flow. In \Cref{sec:rigidity}, we use the rigidity results from \cite{MakowskiScheuer:/2016} to show that the backwards limits of convex quasi-ancient solutions with bounded \(H\) are equators. In \Cref{sec:reflection}, we use an Aleksandrov reflection technique to complete the classification. In \Cref{sec: examples}, we look at two examples of non-geometric flows for which there exist convex, ancient solutions that are not shrinking spheres, illustrating the necessary restriction to geometric flows and justifying our use of geometric methods.

\section*{Acknowledgment}
The authors would like to thank Knut Smoczyk and the Institut f\"{u}r Differentialgeometrie at Leibniz Universit\"at for hosting a research visit where part of this work took place. The first author was a Riemann Fellow at the Riemann Center for Geometry and Physics, Leibniz Universit\"{a}t and was also supported by the EPSRC on a Programme Grant entitled ``Singularities of Geometric Partial Differential Equations'' reference number EP/K00865X/1. The work of the second author was supported by Austrian Science Fund (FWF) Project M1716-N25 and the European Research Council (ERC) Project 306445. All three authors would like to thank Slack.com and Overleaf.com for providing wonderful platforms for collaboration. The figure in this paper was created using the wonderful SageMath Cloud (cloud.sagemath.com).

\section{Preliminaries}\label{prelim}
\label{sec:prelim}
It is well known, e.g. \cite[Chapter 2]{Gerhardt:/2006}, that a curvature function as in \cref{F} also can be considered to depend on the Weingarten map,
\[F=F(\mc{W})=F(h^i_j),\]
in case of which the derivatives of the speed \(F\) are written as
\[
F^{i}_{j} = \frac{\partial F}{\partial h^{j}_{i}}.
\]
It is also possible to consider $F$ as a function of the metric and second fundamental form,
\[
F(g, h) = F(g^{ik} h_{kj}),
\]
where we write
\[
F^{ij} = \frac{\partial F}{\partial h_{ij}}, \quad F^{ij,kl} = \fr{\partial^2F}{\partial h_{kl} \partial h_{ij}}.
\]
These derivatives are related by
\[F^{ij}=g^{ik}F^j_k.\]
Let us also define the operator
\[
\Box = F^{ij} \nabla^2_{ij},
\]
where $\nabla$ is the Levi-Civita connection of the induced metric $g$. Occasionally $\nabla$ will denote the Levi-Civita connection of the round metric of $\S^n$, in which case we will explicitly say so. 
\begin{lemma} \label{lem: basi ev}
The following evolution equations hold.
\eq{
 \label{eq:delt_weingarten_box}
\partial_t h_i^j &= \Box h_i^j + F^{kl} (h^2)_{kl} h_i^j - (F^{kl}h_{kl} - F) (h^2)_i^j + F^{kl,rs}\nabla_i h_{kl}\nabla^j h_{rs} \\
& \quad +  (F + F^{kl}h_{kl}) \delta_i^j - F^{kl}g_{kl} h_i^j,
}
\eq{\label{eq:delt_speed} \partial_t F = \Box F + FF^{ij}(h^2)_{ij} +  FF^{ij}g_{ij}.}
\end{lemma}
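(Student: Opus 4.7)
The proof is a standard direct computation along the flow, analogous to the Euclidean derivations in \cite[Chapter~2]{Gerhardt:/2006}, the only new feature being that the ambient sectional curvature $+1$ of $\S^{n+1}$ produces the linear-in-curvature terms $(F+F^{kl}h_{kl})\delta_i^j-F^{kl}g_{kl}h_i^j$ in \eqref{eq:delt_weingarten_box} and $FF^{ij}g_{ij}$ in \eqref{eq:delt_speed}. I would proceed in three steps.

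First, differentiating $\del_t x=-F\nu$ gives the standard variations $\del_t g_{ij}=-2Fh_{ij}$, $\del_t g^{ij}=2Fh^{ij}$, $\del_t\nu=\nabla F$. Differentiating the second fundamental form in a general ambient and using $\bar R(\nu,\del_i x,\nu,\del_j x)=g_{ij}$ on $\S^{n+1}$ yields
\[\del_t h_{ij}=\nabla_i\nabla_j F-F(h^2)_{ij}+Fg_{ij},\]
and raising an index by $h_i^j=g^{jk}h_{ki}$ together with $\del_t g^{jk}=2Fh^{jk}$ leads to
\[\del_t h_i^j=\nabla_i\nabla^j F+F(h^2)_i^j+F\delta_i^j.\]

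Second, I compute $\nabla_i\nabla^j F$ via the chain rule: $\nabla_k F=F^{ab}\nabla_k h_{ab}$ gives
\[\nabla_i\nabla^j F=F^{ab,cd}\nabla_i h_{cd}\nabla^j h_{ab}+F^{ab}\nabla_i\nabla^j h_{ab},\]
in which the first summand is already the mixed term in \eqref{eq:delt_weingarten_box}. For the second summand, the Simons-type commutation on $\S^{n+1}$---Codazzi (which in constant curvature says $\nabla h$ is fully symmetric in all indices) together with the Ricci identity---rewrites $F^{ab}\nabla_i\nabla^j h_{ab}$ as $\Box h_i^j$ plus contractions of the intrinsic Riemann tensor. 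Inserting the Gauss equation
\[R_{ijkl}=h_{ik}h_{jl}-h_{il}h_{jk}+g_{ik}g_{jl}-g_{il}g_{jk}\]
of $\S^{n+1}$ produces both the quadratic terms $F^{kl}(h^2)_{kl}h_i^j-F^{kl}h_{kl}(h^2)_i^j$ familiar from the Euclidean case and the sphere-specific linear terms $F^{kl}h_{kl}\delta_i^j-F^{kl}g_{kl}h_i^j$ (from the $g\otimes g$ piece of Gauss). Combining these with the $F(h^2)_i^j+F\delta_i^j$ from Step~1 assembles exactly \eqref{eq:delt_weingarten_box}.

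Third, \eqref{eq:delt_speed} follows from the chain rule $\del_t F=F^i_j\,\del_t h^j_i$ applied to \eqref{eq:delt_weingarten_box}. The scalar contractions $F^{kl}(h^2)_{kl}\,F^{ij}h_{ij}-F^{kl}h_{kl}\,F^{ij}(h^2)_{ij}$ and $F^{kl}h_{kl}\,F^{ij}g_{ij}-F^{kl}g_{kl}\,F^{ij}h_{ij}$ cancel by relabeling of dummy indices, while
\[F^i_j\Box h^j_i+F^i_j F^{ab,cd}\nabla_i h_{ab}\nabla^j h_{cd}=\Box F\]
by the same chain-rule identity $\Box F=F^{kl}F^{ab,cd}\nabla_k h_{cd}\nabla_l h_{ab}+F^{ij}\Box h_{ij}$ used above. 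What remains is precisely $FF^{ij}(h^2)_{ij}+FF^{ij}g_{ij}$. The main obstacle throughout is the careful bookkeeping of the Riemann tensor contributions from the Simons commutation; the clean cancellations in the derivation of $\del_t F$ provide a useful sanity check that the signs of the spherical curvature terms in \eqref{eq:delt_weingarten_box} are correct.
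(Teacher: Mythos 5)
Your computation is correct and follows exactly the standard route that the paper cites (Gerhardt, Lemmata 2.3.4 and 2.4.3): differentiate the second fundamental form along the flow picking up the ambient curvature contribution $\bar R(\nu,\cdot,\nu,\cdot)=g$, apply the chain rule and a Simons/Codazzi commutation plus the Gauss equation to convert $F^{ab}\nabla_i\nabla^j h_{ab}$ into $\Box h_i^j$ with the stated curvature corrections, and then obtain \eqref{eq:delt_speed} by contracting \eqref{eq:delt_weingarten_box} with $F^i_j$ and observing the dummy-index cancellations together with $\Box F=F^{ij}\Box h_{ij}+F^{ij}F^{kl,rs}\nabla_ih_{kl}\nabla_jh_{rs}$. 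No deviation from the paper's approach.
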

\pf{The proofs are standard; see \cite[Lemma~2.4.3, Lemma~2.3.4]{Gerhardt:/2006}.}

We will occasionally have to work with graphs \(u\) over an equator $E \subset \mathbb{S}^{n+1}$ around a point $e\in\S^{n+1}.$ In geodesic polar coordinates around $e$ the spherical metric takes the form
\eq{\label{SphMetric}d\-s^2=d\rho^2+\vt(\rho)^2\s_{ij}(x)dx^idx^j,}
where $\rho$ is the radial distance to $e,$ $\vt(\rho)=\sin(\rho)$ and $\s$ the round metric on the equator $E\simeq \S^n.$ Hence, if a hypersurface $M$ is given as a graph of a function $u$ over $E$,
\[M=\{(\rho,(x^i))\in \S^{n+1}\cn \rho=u(x),~ x\in E\},\]
a straightforward computation yields the following representation of the Weingarten map in terms of the function $u,$ namely
\eq{\label{graph h}h^i_j=\fr{\vt'}{v\vt}\d^i_j+\fr{\vt'}{v^3\vt^3}\nabla^iu\nabla_ju-\fr{g^{ik}}{v}\nabla^2_{kj}u,}
where $v^{-1} = \partial_\rho \cdot \nu.$ Covariant derivatives as well as index raising are performed with respect to $\s_{ij};$ see, for example, \cite[(3.82)]{Scheuer:05/2015}.

A one-parameter family of graphs satisfying the flow \eqref{eq:CurvFlow} is a solution of
\begin{equation}
\label{eq:GraphCurvFlow}
\fr{\del}{\del t}u=-F\br{\fr{\vt'}{v\vt}\d^{i}_{j}+\fr{\vt'}{v^{3}\vt^{3}}\nabla^{i}u\nabla_{j}u-\fr{g^{ik}}{v}\nabla^{2}_{kj}u}v=\Phi(x,u,\nabla u,\nabla^{2}u);
\end{equation}
see \cite[p.~98-99]{Gerhardt:/2006}.
\section{Ancient and quasi-ancient Solutions}\label{quasi}
We are interested in solutions with maximal possible lifetime. To understand this maximal time, we define $T_S$ to be the lifespan of \it{the strictly convex spherical solution} of \eqref{eq:CurvFlow}. By the strictly convex spherical solution we mean a family of geodesic spheres shrinking under the flow \eqref{eq:CurvFlow} collapsing to a point at time $t=0$ and existing on the maximal interval \((-T_S, 0)\). \(T_S\) can be finite or infinite, as the following lemma shows.
\begin{lemma}The following assertions hold:
\label{lem:spherical_existence_time}
\begin{description}
  \item[i] Let $p\in(0,1)$ and consider \eqref{eq:CurvFlow} with speed \(F = f^p\) where \(f\) is strictly increasing, concave and 1-homogeneous. Then the strictly convex spherical solution exists only on a finite time interval \((-T_S,0)\) with \(0 < T_S < \infty\), collapses to a point at \(t=0\) and converges to an equator at \(t=-T_S\).
  \item[ii] Let $F$ be $1$-homogeneous, then the strictly convex spherical solution of \eqref{eq:CurvFlow} is ancient.
\end{description}
\end{lemma}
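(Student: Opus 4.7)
Both parts reduce to a scalar ODE for the radius $\rho(t)\in(0,\pi/2)$ of the shrinking geodesic sphere. A geodesic sphere of radius $\rho$ in $\S^{n+1}$ has all principal curvatures equal to $\cot\rho$, so by symmetry of $F$ the quantity $\phi(\rho):=F(\cot\rho,\dots,\cot\rho)$ is smooth and strictly positive on $(0,\pi/2)$. Since \eqref{eq:CurvFlow} is equivariant under the isometry group of $\S^{n+1}$ fixing the centre, spheres stay spherical under the flow, and the spherical solution is governed by
\[
\dot\rho(t)=-\phi(\rho(t)),\qquad \rho(t)\to 0^+ \text{ as } t\to 0^-.
\]
The maximal backward lifespan is $T_S=\int_0^{\pi/2}\phi(\rho)^{-1}\,d\rho$; whenever this integral is finite, strict positivity of $\phi$ on $(0,\pi/2)$ forces the monotone backward limit $\rho\to\pi/2$ as $t\to -T_S^+$ (convergence to an equator). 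So everything reduces to evaluating the above integral.

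For (ii), $1$-homogeneity of $F$ gives $\phi(\rho)=c\cot\rho$ with $c:=F(1,\dots,1)>0$. Separation of variables $\tan\rho\,d\rho=-c\,dt$ integrates to $\cos\rho(t)=e^{ct}$ after normalising the collapse to $t=0$, so $\rho(t)\in(0,\pi/2)$ for all $t\in(-\infty,0)$ and $\rho\to\pi/2$ as $t\to-\infty$. Hence the solution is ancient.

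For (i), $1$-homogeneity of $f$ gives $\phi(\rho)=c^p\cot^p\rho$ with $c:=f(1,\dots,1)>0$. Near $\rho=0$, $\phi^{-1}\sim c^{-p}\rho^p$ is integrable (guaranteeing finite-time collapse, in fact for every $p>0$). Near $\rho=\pi/2$, $\phi^{-1}\sim c^{-p}(\pi/2-\rho)^{-p}$ is integrable precisely because $p<1$, so
\[
T_S=c^{-p}\int_0^{\pi/2}\tan^p\rho\,d\rho<\infty,
\]
and $\rho\to\pi/2$ as $t\to -T_S^+$ as noted.

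The whole argument is elementary separation of variables; no serious obstacle arises once the reduction to the ODE is in place. The only conceptual step is this reduction via symmetry (so that the evolution is captured by a single scalar $\rho(t)$), and the only sharp inequality is the integrability check $\int_0^{\pi/2}\tan^p\rho\,d\rho<\infty \Leftrightarrow p<1$, which is exactly what qualitatively distinguishes the two cases.
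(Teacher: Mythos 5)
Your proposal is correct. For part (ii) your argument coincides with the paper's, which integrates $\dot r = -n\cot r$ directly. For part (i) you take a cleaner, more uniform route: rather than passing to the exact radius ODE as you do, the paper instead invokes the evolution equation \eqref{eq:delt_speed} for the speed $F=f^p$ on a geodesic sphere, bounds it from below via the Gerhardt lemmata to get $\tfrac{d}{dt}f \ge \tfrac{1}{n}f^{p+2}+nf^p$, and integrates the inequality $\tfrac{d}{dt}f^{1-p}\ge n(1-p)$ to conclude $T_S<\infty$. Both arguments reduce to the same integrability criterion $\int_0^{\pi/2}\tan^p\rho\,d\rho<\infty \iff p<1$; your version makes the separation-of-variables structure fully explicit and, as a bonus, directly yields the backward convergence $\rho\to\pi/2$, which the paper's proof of (i) leaves implicit (it proves $T_S<\infty$ and appeals tacitly to maximality for the equatorial limit). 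One might note that the paper's inequality-based argument is slightly more robust in spirit (it would survive if the ODE were only an inequality, e.g.\ for non-spherical comparisons), but for the spherical solution the ODE is exact and your direct integration is the natural proof.
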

\begin{proof}
(i)~We may assume that $f(1,\ldots,1)=n.$
Since $F$ is constant on a geodesic sphere, the evolution equation (\ref{eq:delt_speed}) for a flow of geodesic spheres yields
$$\fr{d}{dt}f\geq \fr 1n f^{p+2}+ nf^{p},$$
where we used \cite[Lemma~2.2.19, Lemma~2.2.20]{Gerhardt:/2006}.
Finite lifespan forward in time follows immediately. Integration over an interval $(a,b) \subset (-T_S, 0)$ yields
$$0\leq f^{1-p}(a)\leq f^{1-p}(b)-n(1-p)(b-a).$$
Allowing $a\ra-\8$ gives the finite time existence backwards in time.

(ii)~Assume $F(1,\ldots,1)=n$ and let $r$ be the radius of the spherical solution. Then \eqref{eq:CurvFlow} yields
\[\dot{r}=-n\cot{r}.\]
The maximal spherical solution satisfies
\[r(0)=0\quad\text{and}\quad r(-T)=\fr{\pi}{2}\]
and hence integration of this ODE yields $T=\8.$
\end{proof}
\begin{lemma}
Let $x$ be a non-equatorial, convex solution of \eqref{eq:CurvFlow}, defined on the open interval $(-T,0),$ where $0$ is the collapsing time, then $T\leq T_S.$
\end{lemma}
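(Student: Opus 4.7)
The plan is to show that $-t_0 < T_S$ for every $t_0 \in (-T, 0)$ by trapping $M_{t_0}$ strictly inside a shrinking geodesic sphere and invoking the avoidance principle for \eqref{eq:CurvFlow}. Since $M_{t_0}$ is a smooth closed convex hypersurface in $\S^{n+1}$ and is not an equator, it is compactly contained in some open hemisphere, so by compactness there exist $p \in \S^{n+1}$ and $r_0 < \pi/2$ such that the enclosed domain of $M_{t_0}$ lies strictly inside the geodesic ball $B_{r_0}(p)$. Denote by $(S_t)_{t \geq t_0}$ the flow of geodesic spheres about $p$ emanating from $\partial B_{r_0}(p)$; by rotational symmetry this is a time-translate of the strictly convex spherical solution of \cref{lem:spherical_existence_time} and therefore collapses to $p$ at a time $t_0 + \tau(r_0)$ with $\tau(r_0) < T_S$, since $r\mapsto \tau(r)$ is a strictly increasing bijection $(0,\pi/2) \to (0, T_S)$ with limit $T_S$ as $r\to \pi/2$.

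The central step is the standard avoidance principle for curvature flows: as $M_{t_0}$ lies strictly inside $S_{t_0}$ and both evolve under \eqref{eq:CurvFlow}, the enclosed domain of $M_t$ remains strictly inside that of $S_t$ for every $t$ in their common existence interval. Since $S_t$ collapses to the single point $p$ at time $t_0 + \tau(r_0)$ and the enclosed region of $M_t$ is contained in that of $S_t$, the hypersurface $M_t$ must itself collapse no later than $t_0 + \tau(r_0)$. Because the collapse time of $M$ is $0$, this forces $0 \leq t_0 + \tau(r_0)$, and combining with $\tau(r_0) < T_S$ yields $-t_0 < T_S$. As $t_0 \in (-T, 0)$ is arbitrary, passing $t_0 \to -T^+$ gives $T \leq T_S$.

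The one point requiring care is the avoidance principle under \cref{F}, since $F$ may be only continuous (and possibly singular or degenerate) on $\bar\G_{+}$. However, the comparison is performed between a smooth strictly convex geodesic sphere and the smooth convex solution $M_t$, for both of which $F$ is smooth and (degenerate) elliptic along the flow; writing the interior hypersurface locally as a graph over the enclosing one and differentiating, the signed distance between the two initially disjoint evolving hypersurfaces satisfies a parabolic differential inequality, to which the classical strong maximum principle applies and preserves disjointness for all $t \geq t_0$. This is the main technical obstacle but is a routine application of the maximum principle in the setting at hand.
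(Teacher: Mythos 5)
Your proof is correct and follows the same approach as the paper's: both rely on the Carmo--Warner result that a non-equatorial convex hypersurface is strictly contained in an open hemisphere, enclose it by a geodesic sphere of radius strictly less than $\pi/2$, and apply the avoidance principle with the spherical barrier, whose lifetime is strictly less than $T_S$. The paper phrases this as a contradiction (supposing $T > T_S + \e$ and looking at the time slice $M_{-T_S - \e/2}$), while you instead show $-t_0 < T_S$ directly for each $t_0 \in (-T,0)$ and let $t_0 \to -T^+$; these are the same argument in different packaging, and the additional remarks on monotonicity of $r \mapsto \tau(r)$ and on the applicability of the avoidance principle under \cref{F} are sensible but not required beyond what the paper takes for granted.
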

\begin{proof}
Suppose $T>T_S+\e$ for some $\e>0$. $M=M_{-T_S-\fr{\e}{2}}$ bounds a convex body $\hat{M}$ and is strictly contained in an open hemisphere due to the classical paper \cite{CarmoWarner:/1970}. Then there exists a geodesic sphere $S$ with $\hat{M}\sub\hat{S}.$ By the avoidance principle, the flow with initial hypersurface $M$ collapses before the spherical flow thus contradicting $T>T_S$.
\end{proof}
In view of this lemma, the following definition is reasonable.
\begin{defn}\label{Quasi}
A convex solution of \eqref{eq:CurvFlow} defined on an interval $(-T,0)$ is called \it{quasi-ancient}, if $T=T_S$. For convenience we also call the static equatorial solution quasi-ancient.
\end{defn}
 By the definition, convex ancient solutions are also quasi-ancient.
For convex speeds of homogeneity $1$, the following proposition gives a bound on mean curvature backwards in time for ancient solutions, using a Harnack inequality. For genuine quasi-ancient solutions ($T_S < \infty$), the Harnack inequality (whenever it holds) does not give such a bound since we cannot send \(s \to -\infty\) as in the proof. For example, a Harnack inequality holds for the flows $F = H^p$, $0<p<1$; see \cite[Theorem 1]{BIS1} and from \cref{lem:spherical_existence_time}, $T_S < \infty$. One could envisage backwards limits as convex polyhedra and hence with unbounded \(H\), but it is not clear that these solutions exist on the maximal time interval \((-T_S, 0)\), i.e., we do not know if they can be quasi-ancient solutions. For the classification of quasi-ancient solutions, we thus make the additional assumption that \(H\) is bounded, and defer the question of whether solutions with unbounded \(H\) can exist as an interesting study for a later date.

For a strictly convex hypersurface, define $(b^{ij})$ to be the inverse of $(h_{ij}).$
\begin{prop}
\label{cor:boundedH}
Suppose $F$ is a strictly monotone, \(1\)-homogeneous and convex curvature function. Then any strictly convex ancient solution of \eqref{eq:CurvFlow} satisfies
\[\partial_t F-b^{ij}\nabla_i F \nabla_j F \geq 0.\]
Therefore, for any $t_0 > 0$ and all $t\le -t_0$ we have
$H(\cdot,t)\leq c(t_0),$
where $c(t_0)<\infty$ depends only on $M_{-t_0}.$
\end{prop}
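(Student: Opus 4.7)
\emph{Plan.} The proof has two parts: first, a Hamilton--Andrews style differential Harnack inequality proved by a maximum-principle argument on the ancient time interval; second, a pointwise monotonicity consequence that upgrades to the claimed mean curvature bound.

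\emph{Step 1: Deriving the Harnack inequality.} I would define the Harnack quantity
\[
Q := \partial_t F - b^{ij}\nabla_i F\,\nabla_j F
\]
and aim to show that $Q$ is a supersolution of a parabolic operator. Starting from $\partial_t F = \Box F + F F^{ij}(h^2)_{ij} + F F^{ij} g_{ij}$ of \cref{lem: basi ev}, I would differentiate in time and use the Weingarten map evolution \eqref{eq:delt_weingarten_box} to compute $\partial_t b^{ij} = -b^{ik}b^{jl}\partial_t h_{kl}$ and $\partial_t(\nabla_i F)$. To evaluate $\Box Q$, the key ingredients are (i) the Codazzi identity on a space form, (ii) commuting pairs of covariant derivatives, which on $\mathbb{S}^{n+1}$ produces Riemann curvature terms with a favourable sign coming from the positive ambient sectional curvature $+1$, (iii) Euler's identity $F^{ij}h_{ij}=F$ and its differentiated form $F^{ij,kl}h_{ij}=0$ from $1$-homogeneity, and (iv) nonnegativity of $F^{ij,kl}\eta_{ij}\eta_{kl}$ from convexity. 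Combining these, the target identity is of the form
\[
\partial_t Q \ge \Box Q + 2\,b^{ij}\nabla_i F\,\nabla_j Q + (\text{nonnegative reaction}),
\]
possibly plus lower-order terms linear in $Q$. Once this differential inequality is in hand, the Harnack inequality $Q\ge 0$ follows from a standard maximum principle argument: on any slab $[-T,-t_0]\subset(-\infty,0)$, a negative infimum of $Q$ forces a contradiction after letting $T\to\infty$, using precisely that the flow is ancient.

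\emph{Step 2: From $Q\ge 0$ to the $H$-bound.} Strict convexity makes $(b^{ij})$ positive definite, so $b^{ij}\nabla_i F\,\nabla_j F\ge 0$, and the Harnack inequality yields $\partial_t F\ge 0$ pointwise in the parameter domain. Consequently $t\mapsto F(x,t)$ is nondecreasing for each fixed $x$, so $t\mapsto \max_{M_t}F$ is nondecreasing as well. To convert the $F$-bound into an $H$-bound, I would use that for a strictly monotone, symmetric, $1$-homogeneous and convex $F$, convexity at $(1,\ldots,1)$ combined with Euler's identity gives
\[
F(\kappa)\ge F(1,\ldots,1)+\sum_i\partial_{\kappa_i}F(1,\ldots,1)(\kappa_i-1)=\sum_i\partial_{\kappa_i}F(1,\ldots,1)\,\kappa_i=c_0\,H,
\]
where $c_0=\partial_{\kappa_1}F(1,\ldots,1)>0$ by symmetry and strict monotonicity. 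Hence for every $t\le -t_0$,
\[
H(\cdot,t)\le c_0^{-1}\max_{M_t}F\le c_0^{-1}\max_{M_{-t_0}}F=:c(t_0),
\]
which proves the claim.

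\emph{Main obstacle.} The principal difficulty is the curvature-tensor bookkeeping in Step 1: the first-order terms in the evolution of $Q$ must assemble exactly into $2\,b^{ij}\nabla_i F\,\nabla_j Q$ in order for the maximum principle to close, while the Codazzi/Ricci commutators on $\mathbb{S}^{n+1}$, Euler's identities from $1$-homogeneity, and the second-derivative terms from convexity must conspire so that every leftover zero-order term is nonnegative. This mirrors the Euclidean Harnack inequality of Andrews for convex, $1$-homogeneous speeds, but adapted to the sphere, where the positive ambient curvature is precisely what removes any lower-order correction term such as $F/(t+T)$.
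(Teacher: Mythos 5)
Your proposal is structurally the same as the paper's: Harnack inequality giving $\partial_t F \geq 0$ on ancient solutions, then the 1-homogeneous convexity estimate $F \geq \tfrac{F(1,\ldots,1)}{n} H$ to convert the $F$-bound to an $H$-bound. Step 2 is essentially verbatim what the paper does, citing \cite[Lemma~2.2.20]{Gerhardt:/2006} for the inequality $F\ge c_0 H$.

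The difference is in Step 1. The paper does not reprove the Harnack estimate from scratch; it simply cites \cite[Theorem 1]{BIS1}, which for convex 1-homogeneous $F$ on the sphere yields
\[
\partial_t F - b^{ij}\nabla_i F\,\nabla_j F + \tfrac{1}{2}\tfrac{F}{t-s} > 0
\]
for solutions defined on $(s, 0)$, and then sends $s \to -\infty$. You instead sketch a direct maximum-principle proof. The sketch is in the right direction, but your final remark — that the positive ambient curvature ``is precisely what removes any lower-order correction term such as $F/(t+T)$'' — is at odds with the cited estimate: the correction term $\tfrac{1}{2}\tfrac{F}{t-s}$ is \emph{still present} in the spherical Harnack estimate of \cite{BIS1}. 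What eliminates it is exactly the ancientness, via letting $s\to -\infty$, which you do also invoke in your slab argument. So the two explanations in your plan conflict; the correct one (and the one the paper uses) is the limit $s\to-\infty$, not a cancellation coming from $+1$ ambient curvature. Also be careful that your ``slab $[-T,-t_0]$, contradiction as $T\to\infty$'' argument, as stated, does not close on its own: with lower-order terms linear in $Q$ one can have a negative infimum achieved at the bottom slice $t=-T$, with no control there. This is precisely why the correction term $F/(t-s)$ (strict positivity at the bottom slice) is introduced in the standard proof. In short: correct overall route, but Step 1 should be viewed as a reproduction of \cite{BIS1} with the correction term, followed by $s\to-\infty$, not as a correction-free sphere miracle.
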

\begin{proof}
For any $t>s$, the  Harnack estimate of \cite[Theorem 1]{BIS1} implies that
$$\partial_t F-b^{ij}\nabla_i F\nabla_j F+\frac{1}{2}\frac{F}{t-s}>0.$$
Allowing $s\to-\infty$ proves the first claim.

In particular, for strictly convex, ancient solutions, $b^{ij} > 0$ and hence $F(\cdot, t)$ is a non-decreasing function in $t$. For the second claim then, observe that for any 1-homogeneous, convex $F$ we have \[F\ge \frac{F(1,\ldots,1)}{n}H,\]
see \cite[Lemma~2.2.20]{Gerhardt:/2006}. Therefore, strictly convex, ancient solutions satisfy
\[H(\cdot,t)\leq \frac{n}{F(1,\ldots,1)}F(\cdot,t)\leq \frac{n}{F(1,\ldots,1)}F(\cdot,-t_0). \]
for all \(t \leq -t_0\).
\end{proof}
\section{Convex, Concave and Homogeneous Speeds}\label{sec:concave_convex_homogeneous}
In this section, we give a quick classification of ancient solutions in three special cases.
\begin{thm}\label{thm: quick}
Let $F$ be strictly monotone, $1$-homogeneous, and either convex, or concave with $H \leq c F$ for some $c>0$. Then any strictly convex ancient solution of the flow \eqref{eq:CurvFlow} with speed $F$ is a family of contracting geodesic spheres.
\end{thm}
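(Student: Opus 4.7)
The plan is to adapt the maximum-principle argument of Huisken--Sinestrari for mean curvature flow on the sphere (\cite[Theorem 6.1]{HuiskenSinestrari:10/2015}) by replacing the classical pinching $(|A|^2 - H^2/n)/H^2$ with the scale-invariant analogue built from $\mathcal{W}/F$, as announced in the introduction. A convenient scalar version is
\[
P := \frac{|A|^2 - H^2/n}{F^2},
\]
which is non-negative by Cauchy--Schwarz and vanishes precisely at umbilic points; in particular $P \equiv 0$ on any geodesic sphere, and $1$-homogeneity of $F$ makes $P$ scale-invariant in a sense compatible with the flow.

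First, I would compute $(\partial_t - \Box)\tilde{h}^j_i$, where $\tilde{h}^j_i := h^j_i/F$, from \cref{lem: basi ev}, using the identity $F^{kl}h_{kl} = F$ coming from $1$-homogeneity. This identity kills the $-(F^{kl}h_{kl}-F)(h^2)^j_i$ term in $\partial_t h^j_i$, and after the quotient-rule cancellations the ambient-curvature reaction reduces cleanly to $2\delta^j_i - 2F^{kl}g_{kl}\,\tilde{h}^j_i$, leaving the Hessian-of-$F$ contribution $F^{-1}F^{kl,rs}\nabla_i h_{kl}\nabla^j h_{rs}$ together with the usual gradient cross-terms that vanish at spatial extrema. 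Tracing appropriately against $\tilde{h}^j_i - (\tilde{H}/n)\delta^j_i$, one derives a parabolic inequality of the form $(\partial_t - \Box)P \leq -I_{\mathrm{grad}} + R$, where $I_{\mathrm{grad}} \geq 0$ gathers the good diffusion terms and $R$ collects the reaction produced by the ambient curvature together with the Hessian-of-$F$ contribution.

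The second step is signing the $F^{kl,rs}$-contribution and bounding $R$ under the two hypotheses. In the convex case, $F^{kl,rs}$ is positive semidefinite and feeds directly into $-I_{\mathrm{grad}}$. In the concave case, $F^{kl,rs}$ is negative semidefinite, but the hypothesis $H \leq cF$ together with Codazzi symmetry and strict monotonicity of $F$ lets one absorb this bad term into $I_{\mathrm{grad}}$ at a spatial maximum of $P$. In both cases the remaining reaction satisfies $R \leq C_F\, P$ for a constant depending only on $F$ and $n$, so
\[
(\partial_t - \Box) P \leq C_F\, P.
\]
Moreover $\tilde{\mathcal{W}}$ is uniformly bounded on the whole solution: in the convex case by the Newton inequality $F \geq F(1,\ldots,1)H/n$ used in the proof of \cref{cor:boundedH}, giving $\tilde{\kappa}_i \leq H/F \leq n/F(1,\ldots,1)$; in the concave case directly by $\tilde{\kappa}_i \leq H/F \leq c$. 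Consequently $M(t) := \max_{M_t} P$ is uniformly bounded on $(-\infty, 0)$, and since $e^{-C_F t}M(t)$ is non-increasing in $t$, the bound $M(s) \geq e^{C_F(t_0 - s)} M(t_0)$ for $s \leq t_0$ forces $M(t_0) = 0$ upon letting $s \to -\infty$. Thus every $M_t$ is umbilic, hence a geodesic sphere, and by uniqueness of the spherical flow the family $\{M_t\}$ is the shrinking spherical solution.

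The step I expect to be most delicate is the absorption of the wrong-signed $F^{kl,rs}$-term in the concave case. The hypothesis $H \leq cF$ provides precisely the linear comparability needed, but the Codazzi-based manipulation that converts $\nabla h$ into expressions controlled by $\nabla \tilde{\mathcal{W}}$ must be executed carefully to produce a favorable constant $C_F$ rather than a dangerous one. Apart from this delicate algebraic step, the argument is a fairly direct transcription of the Huisken--Sinestrari scheme to the non-linear setting, once the correct scale-invariant pinching $P$ has been identified.
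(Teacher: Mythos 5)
The paper's proof is materially simpler than what you propose, and your version contains a genuine sign error that breaks the final step. The paper applies the maximum principle directly to the tensor $w^j_i = h^j_i/F$, tracking a single extremal eigenvalue $\kappa_{\min}/F$ (resp.\ $\kappa_{\max}/F$). Convexity (resp.\ concavity with $H \leq cF$) supplies a trivial one-sided bound $\kappa_{\min}/F \leq 1/n$ (resp.\ $\kappa_{\max}/F \geq 1/n$), and the maximum principle produces the reverse inequality. Crucially, at a critical point of a \emph{single} eigenvalue $w^i_i$ the gradient cross-terms vanish identically and the term $F^{-1}F^{kl,rs}\nabla_i h_{kl}\nabla^i h_{rs}$ has a clean pointwise sign. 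By contrast, tracing against $\tilde{h}^j_i$ to build your scalar $P=(|A|^2-H^2/n)/F^2$ destroys this sign structure: $\langle \tilde{h}, F^{kl,rs}\nabla h\, \nabla h\rangle$ is not signed, and you would need a fully nonlinear analogue of the Huisken--Sinestrari $|\nabla A|^2$ versus $|\nabla H|^2$ gradient estimate, which is special to $\Box = \Delta$ and Codazzi tensors and does not transfer to general $F^{ij}\nabla^2_{ij}$. This is precisely the difficulty the paper circumvents by working with the eigenvalue rather than the scalar norm.

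More concretely, the inequality $(\partial_t - \Box)P \leq C_F P$ with $C_F>0$ that you derive does not force $P\equiv 0$. From $e^{-C_F t}M(t)$ non-increasing you get, for $s\leq t_0$, $M(s)\geq e^{-C_F(t_0-s)}M(t_0)$ --- note the negative exponent, not $e^{+C_F(t_0-s)}$ as you wrote --- and letting $s\to -\infty$ gives only the vacuous $M(s)\geq 0$. For the argument to close you need the attractive reaction $(\partial_t-\Box)P \leq -C_F P$, which yields $M(t_0)\leq M(s)e^{-C_F(t_0-s)}\to 0$. Such damping does come from the spherical curvature: the reaction in the evolution of $h^j_i/F$ is $-2F^{kl}g_{kl}\,h^j_i/F + 2\delta^j_i$, and $F^{kl}g_{kl}\geq 1$ by Euler's identity in the convex case and $F^{kl}g_{kl}\geq n$ in the concave case. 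The paper's proof is the corrected version of exactly this idea, applied to the eigenvalue where every term has a definite sign: the reaction coefficient $-2n$ appears as in your scheme, the sign of $w^i_i - 1/n$ is known, and the backward exponential $e^{-2n(t-s)}\to 0$ as $s\to-\infty$ kills the uniform lower bound $c_s\geq -1/n$. I would recommend rewriting your argument around the eigenvalue $\kappa_{\min}/F$ (or $\kappa_{\max}/F$) rather than $P$, since that immediately fixes the sign issues and avoids the unsigned $F^{kl,rs}$ trace.
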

\pf{
We may assume \(F\) is normalized so that \(F(1, \ldots, 1) = n\). We will give the details for \(F\) convex. The concave case is similar, and we remark below where the proof deviates slightly.

Convexity and 1-homogeneity implies that $F\geq \tfrac{F(1,\ldots,1)}{n} H;$ see \cite[Lemma~2.2.20]{Gerhardt:/2006}. The chosen normalization then implies,
\begin{equation}
\label{eq:speed_ineq}
\frac{\kappa_{\min}}{F} \leq \frac{\kappa_{\min}}{H} \leq \frac{1}{n}.
\end{equation}

Now we prove the reverse inequality, \(\tfrac{\kappa_{\min}}{F} \geq \tfrac{1}{n}\). The tensor
$w^j_i=\fr{h^j_i}{F}$
satisfies the evolution equation
\begin{align*}
\del_tw^j_i&=\Box w^j_i-2 F^{kl}g_{kl}w^j_i+2\d^j_i+\fr{1}{F}F^{kl,rs}\nabla_ih_{kl}\nabla^jh_{rs}\\
    &\hp{=}-2F^{kl}\nabla_kh^j_i\nabla_l\br{\fr{1}{F}}-2\fr{h^i_j}{F^3}F^{kl}\nabla_k F\nabla_l F.
\end{align*}

At a critical point of $w^i_i$ we have
$$0=\nabla_k \br{\fr{h^i_i}{F}}=\fr{\nabla_k h^i_i}{F} + h^i_i\nabla_k\br{\fr{1}{F}},$$
and hence the second line above vanishes:
\begin{multline*}
-2F^{kl}\nabla_kh^i_i\nabla_l\br{\fr{1}{F}} - 2\fr{h^i_i}{F^3}F^{kl}\nabla_k F\nabla_l F \\
= 2F^{kl} \left(Fh^i_i\nabla_k \br{\fr{1}{F}} \nabla_l \br{\fr{1}{F}}  - \fr{h^i_i}{F^3}\nabla_k F\nabla_l F\right) = 0.
\end{multline*}

Homogeneity, convexity and the normalization \(F(1, \ldots, 1) = n\) imply that
\[
F^{kl}g_{kl} \leq n, \quad \text{ and }\quad  F^{kl,rs} \geq 0.
\]
Fixing an index \(i\) and working in normal coordinates centered on a local minimum we have
\[
\frac{1}{F} F^{kl,rs} \nabla_i h_{kl} \nabla^i h_{rs} = \frac{1}{F} F^{kl,rs} \nabla_i h_{kl} \nabla_i h_{rs}
\]
is non-negative. Thus at a spatial minimum,
\begin{align*}
\partial_t \br{w^i_i-\fr{1}{n}} \geq \Box\br{w^i_i-\fr{1}{n}} - 2n\br{w^i_i-\fr{1}{n}}.
\end{align*}
By the maximum principle, for any $s<0$, we obtain the bound
\begin{align*}
w^i_i-\fr 1n \geq c_s e^{-2n(t-s)}
\end{align*}
for $t\geq s$, and where
\[
c_s := \min \frac{\kappa_{\min}}{F}(\cdot, s)- \frac{1}{n}.
\]
Moreover, we have the uniform bounds
\[
-\frac{1}{n} \leq c_s \leq 0
\]
from equation \eqref{eq:speed_ineq}.

Taking the limit $s\to -\infty$ we then obtain for all $t < 0$,
\begin{equation}
\label{eq:mp_speed_ineq}
\frac{\kappa_{\min}}{F} \geq \frac{1}{n}.
\end{equation}

Combining equations \eqref{eq:speed_ineq} and \eqref{eq:mp_speed_ineq} we obtain
\[
\frac{\kappa_{\min}}{F} \leq \frac{\kappa_{\min}}{H} \leq \frac{1}{n} \leq \frac{\kappa_{\min}}{F}.
\]
Hence, $H\equiv n \kappa_{\min}$ and the flow is by totally umbilical, closed hypersurfaces, and hence by geodesic spheres.

In the concave case, we obtain
\begin{align*}
w^i_i-\fr 1n \leq d_s e^{-2n(t-s)},
\end{align*}
where
$
d_s := \max \frac{\kappa_{\max}}{F}(\cdot, s) - \frac{1}{n}\geq 0.
$
Using the additional assumption \(\tfrac{H}{F} \leq c\), we obtain that
\[d_s\leq \max\frac{H(\cdot, s)}{F}-\frac{1}{n} \leq c-\frac{1}{n}.\]
Now by letting $s\to-\infty,$ we conclude that $H \equiv n \kappa_{\max}$ and the claim follows.
}
\begin{rem}
The bounds,
\[
\begin{split}
\frac{\kappa_{\max}}{F} &\leq \frac{1}{n}, \quad \text{$F$ concave} \\
\frac{\kappa_{\min}}{F} &\geq \frac{1}{n}, \quad \text{$F$ convex}
\end{split}
\]
may also be obtained from the curvature ball estimates of \cite[Theorem 1.1]{AndrewsHanLiWei:/2015}, which imply that any strictly convex solution of the flow with a 1-homogeneous, concave speed $F$ satisfies
\[
\frac{\kappa_{\max}}{F}(x,t) \leq \frac{1}{n}+d_se^{-2n(t-s)},
\]
with \(d_s\) as in the proof above. For convex speeds, the curvature ball estimates imply that
\[
\frac{\kappa_{\min}}{F}(x,t)\geq \frac{1}{n}+c_se^{-2n(t-s)},
\]
where \(c_s\) is from the proof above. The proof applying the maximum principle to \(h^i_i/F\) above, though weaker than the curvature ball estimates, is significantly simple and suffices for our purposes.
\end{rem}
\begin{rem}
In virtue of the evolution equations in Lemma \ref{lem: basi ev}, for strictly increasing, 1-homogeneous, convex speeds $F$ with $F\in C^1(\bar\Gamma_{+})$,  any non-equatorial, convex, ancient solution of (\ref{eq:CurvFlow}) is in fact strictly convex. To see this, apply the strong maximum principle to the evolution equation of $F$ to conclude that $F$ must be strictly positive for any time. On the other hand, the last term in the evolution equation of $\kappa_{\min}$ is bounded below by
\[F+F-n\kappa_{\min}\geq F+H-n\kappa_{\min}\ge F>0.\]
Therefore, the statement of Theorem \ref{thm: quick} holds under the weaker assumption of convexity of ancient solutions rather than strict convexity.
\end{rem}
We end this section by providing a short proof for the classification result of \cite{BryanLouie:04/2016}.
\begin{thm}\cite{BryanLouie:04/2016}
The only convex, ancient solutions of the curve shortening flow on the sphere are shrinking geodesic circles or equators.
\end{thm}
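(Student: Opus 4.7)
The plan is to reduce the statement directly to \Cref{thm:main}, applied to the curve shortening flow on $\S^2$, i.e., the case $n=1$ with speed $F(\kappa)=\kappa$. Once the elementary checks are performed, the proof collapses to a short consolidation of tools already developed above.

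First I would verify that $F=\kappa$ satisfies \cref{F}: symmetry is vacuous for $n=1$, positivity on $\Gamma_{+}=(0,\infty)$ with $F(0)=0$ and strict monotonicity are immediate, and $F\in C^{1}(\bar\Gamma_{+})$ since $F$ is linear. Being $1$-homogeneous, \cref{lem:spherical_existence_time}(ii) gives $T_S=\infty$, so any convex ancient solution is automatically quasi-ancient in the sense of \cref{Quasi}.

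Next I would separate the equatorial case and then upgrade convexity to strict convexity. The static equator is already among the allowed conclusions, so I may assume $M_t$ is non-equatorial. The remark following \cref{thm: quick} applies because $\kappa$ is strictly monotone, $1$-homogeneous, convex and $C^{1}(\bar\Gamma_{+})$; it yields that $M_t$ is in fact strictly convex for every $t$. Consequently \cref{cor:boundedH} becomes available -- the $1$-homogeneity and convexity of $F$ are precisely what power the Harnack estimate underlying it -- providing $H(\cdot,t)\le c(t_0)$ for all $t\le -t_0$ and hence $\limsup_{t\to -\infty}\max_{M_t}H<\infty$.

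With all hypotheses of \cref{thm:main} now in place (in fact, both alternatives (i) and (ii) hold simultaneously), the conclusion is immediate: $M_t$ is a flow of geodesic circles. I do not anticipate any serious obstacle; the hard content, namely the rigidity argument of \Cref{sec:rigidity} combined with the Aleksandrov reflection of \Cref{sec:reflection}, has already been packaged into \cref{thm:main}. The only conceptual point requiring any care is the preliminary case split between equatorial and non-equatorial solutions, which is needed before one may invoke the remark that promotes convexity to strict convexity and thereby unlocks the curvature bound.
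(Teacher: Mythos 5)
Your reduction is logically sound: $F=\kappa$ satisfies \cref{F}, is $1$-homogeneous so $T_S=\infty$ and ancient coincides with quasi-ancient, the remark after \cref{thm: quick} upgrades a non-equatorial convex solution to strictly convex, \cref{cor:boundedH} then supplies $\limsup_{t\to-\infty}\max H<\infty$, and \cref{thm:main} closes the argument (with both alternatives (i) and (ii) holding). However, this is a genuinely different route from the one the paper takes, and it runs against the grain of why the theorem is included: the text introduces it with ``We end this section by providing a short proof,'' and then proves it with a two-line monotonicity argument rather than by invoking the main machinery. The paper computes that
\[
q(t)=\Bigl(\int_{\gamma_t}\kappa\,ds\Bigr)^2+\Bigl(\int_{\gamma_t}ds\Bigr)^2
\]
is non-increasing along the flow, uses the cited fact that $\int_{\gamma_t}\kappa\,ds\to 0$ as $t\to-\infty$ to get $\lim_{t\to-\infty}q(t)=4\pi^2$, and then combines Gauss--Bonnet with the spherical isoperimetric inequality, which forces $q\equiv 4\pi^2$; the equality case of the isoperimetric inequality characterizes $\gamma_t$ as geodesic circles or equators. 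By contrast, your proof invokes \cref{thm:main}, whose own proof rests on the rigidity theorem of \cite{MakowskiScheuer:/2016} and the full Aleksandrov reflection scheme of \Cref{sec:reflection}, plus \cref{cor:boundedH} which needs the Harnack estimate of \cite{BIS1}. So where the paper's proof is self-contained, elementary, and illustrates a complementary integral-geometric technique special to $n=1$, yours buys nothing extra in the curve case and imports far more technology. It is worth recording explicitly, as you do, that the reduction through \cref{thm:main} is available and not circular, but one should be clear that this is a corollary of the general theorem rather than the ``short proof'' the section advertises.
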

\begin{proof}
Suppose $\gamma_t$ is a convex, ancient solution to the curve shortening flow. Let $s$ denote the arc-length parameter of $\gamma_t$. By Lemma \ref{lem: basi ev} and $\frac{d}{dt}ds=-\kappa^2ds$ we have
\begin{align*}
\frac{d}{dt}\left(\left(\int_{\gamma_t}\kappa ds\right)^2+\left(\int_{\gamma_t}ds\right)^2\right)=2\left(\left(\int_{\gamma_t}\kappa ds\right)^2-\int_{\gamma_t}ds\int_{\gamma_t}\kappa^2ds\right)\leq 0.
\end{align*}
Therefore,
\[q(t) = \left(\int_{\gamma_t}\kappa ds\right)^2+\left(\int_{\gamma_t}ds\right)^2 \]
is a non-increasing function. By \cite[Lemma 4.1]{BryanLouie:04/2016}, $\lim\limits_{t\to-\infty}\int_{\gamma_t}\kappa ds=0;$ therefore,
\[
\lim_{t\to -\infty} q(t) = 4 \pi^2.
\]
On the other hand, by the Gauss-Bonnet theorem and the isoperimetric inequality \cite{Rado:10/1935}, $q(t) \geq 4\pi^2$ and hence $$q(t)\equiv 4\pi^2.$$
By the characterization of the equality cases, $\gamma_t$ are either shrinking geodesic circles or equators.
\end{proof}
\section{Rigidity and Backwards Limit}\label{sec:rigidity}
Now we turn to more general speeds \(F\). The aim of this section is to prove that for a quasi-ancient solution of \eqref{eq:CurvFlow} \it{with bounded mean curvature} for $t\ra -T_S$ the backwards limit of the flow hypersurfaces $M_t$  is an equator. We will use the rigidity result of \cite{MakowskiScheuer:/2016} to achieve this. For convenience, we state this result:
\begin{thm}\label{Rigidity}\textsc{\cite[Theorem 1.1]{MakowskiScheuer:/2016}}
Let $ n\geq 1$ and $\hat{M}\subset \S^{n+1}$ be a weakly convex body in a hemisphere. Let $x_0\in \S^{n+1}$ be such that $\hat{M}$ is contained in the closed hemisphere $\mc{H}(x_0)$ with equator $\mc{S}(x_{0})$. Suppose that $\hat{M}$ satisfies an interior sphere condition at all points $p\in \hat{M} \cap \mc{S}(x_0)$. Then either $\hat{M}$ is equal to $\mc{H}(x_0)$ or $\hat{M}$ is contained in an open hemisphere.
\end{thm}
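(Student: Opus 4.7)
I would argue by contradiction, assuming $\hat{M} \neq \mc{H}(x_0)$ and $\hat{M} \cap \mc{S}(x_0) \neq \emptyset$, and construct a tilted pole $x_1 \in \S^{n+1}$ with $\hat{M}$ lying in the open hemisphere around $x_1$. First, I would observe that $A := \hat{M} \cap \mc{S}(x_0)$ is a closed weakly convex subset of $\mc{S}(x_0) \cong \S^n$, since $\mc{S}(x_0)$ is totally geodesic in $\S^{n+1}$. I would then rule out $A = \mc{S}(x_0)$: if this held, for any pair $p, q \in \mc{S}(x_0)$ close to being antipodal the unique shortest geodesic between them passes near $\pm x_0$ and lies in $\hat{M}$ by weak convexity, so closedness forces $x_0 \in \hat{M}$, and a second application of convexity gives $\mc{H}(x_0) \subset \hat{M}$, contradicting $\hat{M} \neq \mc{H}(x_0)$. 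Hence $A \subsetneq \mc{S}(x_0)$, and applying \cite{CarmoWarner:/1970} within $\mc{S}(x_0)$, $A$ lies in a closed hemisphere of $\mc{S}(x_0)$ bounded by a totally geodesic $\Sigma \cong \S^{n-1}$, with a unit vector $\nu \in T_{x_0}\S^{n+1}$ tangent to $\mc{S}(x_0)$, normal to $\Sigma$, and pointing away from $A$.

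Next I would analyze the tangential geometry at the equator. For each $p \in A$ the interior sphere condition produces a geodesic ball $B_\rho(q) \subset \hat{M}$ with $p \in \partial B_\rho(q)$. Because $B_\rho(q) \subset \mc{H}(x_0)$ and $p \in \partial B_\rho(q) \cap \mc{S}(x_0)$, the ball must be tangent to $\mc{S}(x_0)$ at $p$; otherwise some portion of $B_\rho(q)$ would cross into $\mc{H}(-x_0)$. This pins the center $q$ on the inward geodesic normal to $\mc{S}(x_0)$ at $p$ at distance $\rho$. Compactness of $A$ combined with a standard continuity argument for the interior radius would then yield a uniform lower bound $\rho_0 > 0$ on these radii.

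The final step is the tilt. For small $\e > 0$ I would set $x_1 = \cos(\e)\, x_0 - \sin(\e)\, \nu$ and embed $\S^{n+1} \subset \R^{n+2}$ to work with $\langle \cdot, \cdot \rangle$. For any $p \in A$ one has $\langle p, x_0 \rangle = 0$ and $\langle p, \nu \rangle \leq 0$, so $\langle p, x_1 \rangle = -\sin(\e)\langle p, \nu \rangle \geq 0$, with strict inequality off $\Sigma$. For general $y \in \hat{M}$, either $\langle y, x_0 \rangle$ is bounded below by some $\delta > 0$ (in which case small $\e$ preserves $\langle y, x_1 \rangle > 0$ by continuity), or $y$ is near $A$, where the uniform ball $B_{\rho_0}(q(p))$ places $y$ at controlled positive distance from $\mc{S}(x_0)$, and the tilt by $-\nu$ pushes $y$ into the open hemisphere around $x_1$ provided $\e$ is sufficiently small relative to $\rho_0$. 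This would deliver $\hat{M}$ strictly inside the open hemisphere around $x_1$, as desired.

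The main obstacle is the last quantitative matching step, which ties the tilt angle $\e$ to the uniform interior radius $\rho_0$. The interior sphere condition, assumed at every point of $A$, is indispensable here: without it, $A$ could exhibit cusp-like behavior near $\Sigma$ or elsewhere on its relative boundary, and no single tilt would suffice to push every point of $\hat{M}$ off $\mc{S}(x_1)$ while retaining containment in $\mc{H}(x_1)$.
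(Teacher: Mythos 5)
This statement is quoted verbatim from Makowski--Scheuer \cite[Theorem 1.1]{MakowskiScheuer:/2016}; the present paper does not reprove it, so there is no internal proof to compare your proposal against. Evaluating the proposal on its own terms, there are genuine gaps.

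\textbf{The argument ruling out $A = \mc{S}(x_0)$ is wrong.} You claim that for $p, q \in \mc{S}(x_0)$ close to antipodal, the unique shortest geodesic between them passes near $\pm x_0$. This is false: $\mc{S}(x_0)$ is totally geodesic in $\S^{n+1}$, so the unique minimizing geodesic between two points of $\mc{S}(x_0)$ at distance $<\pi$ lies entirely in $\mc{S}(x_0)$, nowhere near $x_0$. For exactly antipodal $p, -p$ there are infinitely many minimizing geodesics, but weak convexity only gives you that \emph{some} minimizing geodesic lies in $\hat{M}$, not the one through $x_0$. The conclusion $A \subsetneq \mc{S}(x_0)$ is not justified by your reasoning. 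One way to actually rule out $A = \mc{S}(x_0)$: use the ISC (once you have a uniform radius $\rho_0$) to show $\hat M$ contains a collar of width $\rho_0$ on the $x_0$-side of the equator; then take two points of the collar at distance $\rho_0/2$ from $p$ and from $-p$ on the great circle through $p, x_0, -p$; these are at $\S^{n+1}$-distance $\pi - \rho_0 < \pi$, so their unique minimizing geodesic lies in $\hat M$ by weak convexity and passes through $x_0$. With $x_0 \in \hat M$, radial geodesics from $x_0$ to every point of $A = \mc{S}(x_0)$ fill all of $\mc{H}(x_0)$.

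\textbf{The tilt does not give strict containment when $A$ meets $\Sigma$.} Carmo--Warner places $A$ in a \emph{closed} hemisphere of $\mc{S}(x_0)$ bounded by $\Sigma$, but says nothing preventing $A \cap \Sigma \neq \emptyset$ (e.g. $A$ itself could be a closed hemisphere of $\mc S(x_0)$). For $p \in A \cap \Sigma$ you have $\ip{p}{\nu} = 0$ and hence $\ip{p}{x_1} = -\sin\e\,\ip{p}{\nu} = 0$, so $p$ sits exactly on the new equator $\mc{S}(x_1)$ and $\hat M$ is not contained in the \emph{open} hemisphere around $x_1$. You need either to establish $A \cap \Sigma = \emptyset$ (not done) or to iterate the tilt — noting that the ISC persists for $\mc S(x_1)$ — and argue the iteration terminates, e.g.~by induction on the dimension of the affine hull of $A$. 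Neither is present, and your concluding remark attributes the obstruction entirely to the ISC, missing this point.

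\textbf{The uniform interior radius needs an argument.} The hypothesis gives an interior sphere at each $p \in A$ with some radius, but radii could a priori degenerate along $A$; "compactness combined with a standard continuity argument" is not automatic because the optimal interior radius is only lower semicontinuous under additional structure. This is a fixable but non-trivial point that should be spelled out or cited.

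The overall plan — reduce to the trace $A$ on the equator, use Carmo--Warner inside $\mc{S}(x_0)$, and tilt — is a reasonable skeleton, but in its current form the proof fails at the step you yourself identify as "the main obstacle" and also at the preliminary step 3.
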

Here $\hat{M}$ is a weakly convex body in a hemisphere $\mc{H}(x_0),$ if it is a compact set with non-empty interior and for every two points $p,q\in\hat{M}$ there exists a minimizing geodesic connecting $p$ with $q,$ while being contained in $\hat{M}.$ $\hat{M}$ satisfies an interior sphere condition at $p\in \del\hat{M}$ with radius $R,$ if there exists a geodesic ball $B_R$ with radius $R,$ such that
\[\del B_R\cap\del \hat{M}=\{p\}\quad~\text{and}~B_R\sub\mrm{int}(\hat{M}). \]
Note that the points $p$ in \cref{Rigidity} are automatically boundary points of $\hat{M}.$

In what follows, in addition to the notion of ``weakly convex body", we will also need the notions of ``weakly convex set", ``convex set" and ``convex body" for which we refer the reader to \cite[Definition 3.2]{MakowskiScheuer:/2016}.

The next lemma is the first step in providing the assumptions of \cref{Rigidity}.
\begin{lemma}\label{ISC}
Let $x$ be a convex, quasi-ancient solution of \eqref{eq:CurvFlow} with backwards bounded mean curvature. Then there holds:
\begin{enumerate}
  \item For all $t_0<0$ there exists a uniform radius $R>0,$ such that the enclosed convex bodies $\hat{M}_t,$ $-T_S<t\leq t_0,$ of the flow hypersurfaces $M_t$ satisfy a uniform interior sphere condition with radius $R.$
  \item For every $y_0\in\mrm{int}~\hat{M}_{t_0}$ the hypersurfaces $M_t,$ $-T_S<t\leq t_0,$ can be written as a graph in geodesic polar coordinates around $y_0$ and the corresponding graph functions satisfy uniform $C^2$-estimates.
\end{enumerate}
\end{lemma}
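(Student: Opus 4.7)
The plan is to convert the backwards mean curvature bound, via convexity, into a uniform bound on all principal curvatures of $M_t$, from which both parts follow from elementary spherical convex geometry. Convexity forces $\kappa_i \ge 0$, so the backward boundedness of $H$ together with smoothness of the flow on $(-T_S, t_0]$ gives $\kappa_i(\cdot,t) \le C(t_0)$ uniformly for all $t \in (-T_S, t_0]$. For part (1) I would then invoke the spherical rolling ball principle: a smooth convex body $\hat K \subset \mathbb{S}^{n+1}$ whose principal curvatures are bounded above by $C$ admits at every boundary point an interior geodesic ball of radius $R = \operatorname{arccot} C$. Locally this follows from comparing $\partial\hat K$ with its osculating geodesic sphere of radius $R$ (umbilic with principal curvature $\cot R = C$); globally, convexity of $\hat K$ upgrades the local tangential containment to the inclusion $B_R \subset \hat K$. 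This yields the uniform interior sphere condition with $R = R(t_0)$.

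For part (2), the strict inwardness of the flow ($F > 0$ on $\Gamma_+$, $\nu$ outward) gives $\hat M_t \supset \hat M_{t_0}$ for every $t \in (-T_S, t_0]$, so any geodesic ball $B_{r_0}(y_0) \subset \hat M_{t_0}$ is contained in every $\hat M_t$. Each $\hat M_t$ is a non-equatorial convex body, hence lies in a closed hemisphere by \cite{CarmoWarner:/1970}; combined with $y_0 \in \operatorname{int}\hat M_t$, the hypersurface $M_t$ is star-shaped from $y_0$ and is represented as a smooth graph $\rho = u(\cdot, t)$ in geodesic polar coordinates centered at $y_0$. The $C^0$ lower bound $u \ge r_0$ is immediate.

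For the $C^0$ upper bound I would argue by contradiction: if $u(\xi_k, t_k) \to \pi$ along some sequence, the corresponding boundary points $p_k$ tend to the antipode of $y_0$, and by part (1) each $\hat M_{t_k}$ contains both $B_{r_0}(y_0)$ and an interior ball $B_R(q_k)$ at $p_k$ with $d(y_0, q_k) \ge u(\xi_k, t_k) - R \to \pi - R$. But a convex body in $\mathbb{S}^{n+1}$ enclosing two balls of radii $r_0, R$ whose centers are at spherical distance exceeding $\pi - r_0 - R$ cannot lie in any closed hemisphere (since a ball of radius $r$ inside a hemisphere must have its center within $\pi/2 - r$ of the pole), contradicting \cite{CarmoWarner:/1970}. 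Hence $u \le \pi - \delta$ uniformly for some $\delta = \delta(r_0, R) > 0$. The $C^1$ bound follows because the supporting hyperplanes of $\hat M_t$ remain at spherical distance $\ge r_0$ from $y_0$, which keeps the angle between $\nu$ and $\partial_\rho$ bounded away from $\pi/2$ and hence $v = \langle \nu, \partial_\rho \rangle^{-1}$ and $|\nabla u|$ bounded. Finally, solving the graph formula \eqref{graph h} for $\nabla^2 u$ and substituting the uniform bounds on the Weingarten map from part (1) together with those on $u$ and $\nabla u$ yields the $C^2$ estimate.

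The hardest step is the $C^0$ upper bound $u \le \pi - \delta$: without it the graph representation degenerates as points of $M_t$ approach the antipode of $y_0$. Ruling this out is precisely where the uniform interior sphere condition of part~(1) must be combined with the Do~Carmo-Warner hemispherical confinement; the remaining estimates are then routine manipulations of \eqref{graph h}.
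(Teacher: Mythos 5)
Your proposal is essentially correct and follows the same broad strategy as the paper: exploit convexity to turn the backwards $H$-bound into a uniform bound on all principal curvatures, deduce a uniform interior sphere condition, and then derive graph $C^0$/$C^1$/$C^2$ estimates from the interior-ball and hemisphere confinement together with \eqref{graph h}. The two places where you take a genuinely different route are worth pointing out. For part (1), you invoke a direct \emph{spherical} rolling-ball theorem (principal curvatures $\le C$ yields an interior geodesic ball of radius $\operatorname{arccot} C$ at every boundary point); this statement is true for convex bodies in a hemisphere, but your justification (``convexity upgrades local tangential containment to global inclusion'') is precisely the nontrivial content of the theorem, so it needs either a reference or an actual argument. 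The paper sidesteps this delicacy by stereographic projection from the excluded antipodal point: the images are uniformly bounded, strictly convex Euclidean hypersurfaces with uniformly bounded second fundamental form, the classical Euclidean Blaschke rolling theorem applies, and the uniform Euclidean interior ball pulls back (with controlled distortion on the compact region) to a uniform spherical one. For part (2)'s $C^0$ upper bound $u \le \pi - \delta$, your contradiction argument using part (1) together with the hemisphere confinement of Do Carmo--Warner is a clean, self-contained alternative; the paper instead quotes the proof of \cite[Lemma~3.9]{MakowskiScheuer:/2016} to obtain a single fixed closed hemisphere $\mathcal{H}(x_0)$ containing all $\hat M_t$, from which the exclusion of $B_\varepsilon(\hat y_0)$ is immediate. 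One small misattribution: in your final $C^2$ step the Weingarten bounds you substitute into \eqref{graph h} come from the opening $H$-bound-plus-convexity observation, not from part (1), which only furnishes the interior sphere condition; this does not affect the argument.
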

\pf{
Fix an interior point $y_0\in \mrm{int}~\hat{M}_{t_0}.$ Since for a contracting flow the enclosed convex bodies of the flow hypersurfaces are decreasing, they are increasing backwards in time. By the proof of \cite[Lemma~3.9]{MakowskiScheuer:/2016} there exists a closed hemisphere $\mc{H}(x_0),$ such that
$$\hat{M}_t\sub\mc{H}(x_0).$$
In our situation all hypersurfaces $M_t,$ $-T_S<t\leq t_0,$ satisfy
$$B_{\e}(y_0)\sub \mrm{int}~\hat{M}_t$$
and
$$B_{\e}(\hat{y}_0)\sub \hat{M}_t^c$$
with a uniform $\e,$ where $\hat{y}_0$ denotes the antipodal point of $y_0.$
Now we prove the two claims.

\begin{enumerate}
  \item Consider the stereographic projection with $\hat{y}_0$ corresponding to infinity. The image hypersurfaces are then strictly convex hypersurfaces in Euclidean space with uniformly bounded second fundamental form. Blaschke's rolling theorem (see \cite{Blaschke:/1956}) gives the interior sphere condition.
  \item Write the $M_t$ as graphs in geodesic polar coordinates around $y_0,$
$$M_t=\{(\rho,x^i)\cn \rho=u(t,x^i)\}.$$
Due to \eqref{SphMetric}, on the set in which $M_t$ range, the metrics $g_{ij},$ $\-g_{ij}=\sin^2\rho\s_{ij}$ and $\s_{ij}$ are all equivalent.
In view of \cite[Theorem 2.7.10]{Gerhardt:/2006}, for all convex hypersurfaces $M_t$ the quantity
\begin{align*}
v^2=1+\-g^{ij}\nabla_iu\nabla_ju
\end{align*}
is uniformly bounded by a constant which only depends on $\e.$
Hence by the equivalence of norms, $M_t$ are uniformly $C^1$-bounded in the sense that the corresponding functions $u(t,\cdot)$ are uniformly $C^1(\S^n)$-bounded. Recalling equation \eqref{graph h}, due to the curvature estimates we obtain uniform $C^2(\S^n)$-estimates for \(u\).
\end{enumerate}
}
\begin{cor}\label{Backlimit}
Let $x$ be a convex and quasi-ancient solution of \eqref{eq:CurvFlow} with backwards bounded mean curvature. Then there exists a unique backwards limiting hypersurface $M_{-T_S}$ and the flow hypersurfaces $M_t$ converge to $M_{-T_S}$ in $C^{1,\b},$ $0<\b<1,$ in the sense that for a common graph representation as in Lemma \ref{ISC} there holds
$$u(t,\cdot)\ra u(-T_S,\cdot)$$
in the norm of $C^{1,\b}(\S^n).$
\end{cor}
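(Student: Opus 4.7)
The plan is to combine the uniform $C^2$ estimates from \cref{ISC} with the monotonicity of the enclosed convex bodies backwards in time to produce a unique limit. First I would fix $t_0 < 0$ and choose $y_0 \in \mathrm{int}\,\hat{M}_{t_0}$. Since the flow is contracting, the enclosed convex bodies satisfy $\hat M_s \supset \hat M_t$ whenever $-T_S < s \leq t \leq t_0$; in particular $y_0 \in \mathrm{int}\,\hat M_t$ for every $t \in (-T_S, t_0]$. By \cref{ISC}(2) all $M_t$ then admit a common graph representation $u(t, \cdot) \in C^2(\S^n)$ around $y_0$ with a $t$-independent $C^2$-bound.

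Next I would translate this nesting into pointwise monotonicity: since $\hat M_t$ grows as $t$ decreases, the graph function $u(t, x)$ is non-increasing in $t$ for each fixed $x \in \S^n$. The uniform upper bound $u(t, x) \leq \pi - \e$, coming from the containment $B_\e(\hat y_0) \subset \hat M_t^c$ established in the proof of \cref{ISC}, combined with monotone convergence yields a pointwise limit
\[
u_\8(x) := \lim_{t \to -T_S} u(t, x)
\]
for every $x \in \S^n$.

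Finally, by Arzel\`a--Ascoli applied to the uniformly $C^2$-bounded family $\{u(t, \cdot)\}_{t \in (-T_S, t_0]}$, every sequence $t_k \to -T_S$ admits a subsequence along which $u(t_k, \cdot)$ converges in $C^{1,\b}(\S^n)$ for any $0 < \b < 1$. Pointwise convergence forces each subsequential limit to agree with $u_\8$, so in fact the full family converges, $u(t, \cdot) \to u_\8$ in $C^{1,\b}(\S^n)$ as $t \to -T_S$. Defining $M_{-T_S}$ to be the graph of $u_\8$ then finishes the argument.

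The main subtlety is securing a common graph representation uniformly in $t$; this is handled by the backwards monotonicity of $\hat M_t$ together with the uniform interior-ball condition furnished by \cref{ISC}. Beyond that, the proof is standard compactness plus uniqueness of the pointwise monotone limit. We do not obtain $C^2$-convergence by this argument, as the uniform $C^2$-bound only yields compactness in strictly weaker norms; the $C^{1,\b}$-convergence stated in the corollary is precisely what Arzel\`a--Ascoli delivers.
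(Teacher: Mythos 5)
Your proof is correct and follows the same route as the paper: pointwise monotonicity of the graph function backwards in time (coming from the nesting of the enclosed convex bodies) produces a pointwise limit, and the uniform \(C^2\)-bounds from \cref{ISC} give compactness in \(C^{1,\b}\), which together with uniqueness of the pointwise limit upgrades subsequential convergence to full convergence. The paper states exactly this in two sentences; you have simply spelled out the details, including the correct observation that \(C^{1,\b}\) (and not \(C^2\)) is what Arzel\`a--Ascoli delivers from uniform \(C^2\)-bounds.
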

\pf{
In view of the point-wise monotonicity of $u(t,\cdot)$ backwards in time, we obtain a point-wise limit. The $C^{1,\b}$-convergence follows from compactness.
}
\begin{thm}
\label{thm:backwardslimit}
The hypersurface $M_{-T_S}$ defined in Corollary \ref{Backlimit} is an equator.
\end{thm}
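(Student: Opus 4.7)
The plan is to apply the Makowski--Scheuer rigidity theorem (\Cref{Rigidity}) to the backwards limit body $\hat{M}_{-T_S}$, and then to rule out the non-hemispherical alternative using the avoidance principle together with the defining property of $T_S$.

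First, I would verify the hypotheses of \Cref{Rigidity} for $\hat{M}_{-T_S}$. By \Cref{ISC}, each $\hat{M}_t$ sits in a common closed hemisphere $\mc{H}(x_0)$, contains a common geodesic ball $B_\e(y_0)$, and satisfies an interior sphere condition with a uniform radius $R$. Since the flow is contracting, the enclosed bodies $\hat{M}_t$ are nested and monotone increasing as $t \searrow -T_S$, so the $C^{1,\b}$-limit from \Cref{Backlimit} is a compact set with non-empty interior contained in $\mc{H}(x_0)$. Weak convexity of $\hat{M}_{-T_S}$ passes to the limit by a standard compactness argument on minimizing geodesics (approximate $p, q \in \hat{M}_{-T_S}$ by points in the $\hat{M}_{t_n}$, use weak convexity there, extract a subsequential limit), and the interior sphere condition survives with the same radius $R$.

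\Cref{Rigidity} then supplies the dichotomy: either $\hat{M}_{-T_S} = \mc{H}(x_0)$, in which case $M_{-T_S} = \del \mc{H}(x_0)$ is an equator and we are done, or $\hat{M}_{-T_S}$ is contained in an open hemisphere. The crucial step is excluding the latter, and this is precisely where the quasi-ancient hypothesis $T = T_S$ is essential. Suppose $\hat{M}_{-T_S}$ lies in an open hemisphere; then it is contained in a closed geodesic ball of some radius $r_0 < \pi/2$. Denote by $L(r)$ the lifespan of the strictly convex spherical flow with initial radius $r$, obtained by integrating the ODE $\dot r = -F(\cot r, \ldots, \cot r)$; standard continuous-dependence theory makes $L$ a continuous, strictly increasing function on $(0, \pi/2)$ with $L(r) \to T_S$ as $r \to \pi/2^-$. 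In particular $L(r_0) < T_S$, so one may choose $\d \in (0, T_S - L(r_0))$. By monotonicity, $\hat{M}_{-T_S+\d} \subset \hat{M}_{-T_S} \subset B_{r_0}$, and the avoidance principle applied to the smooth flow $M_t$ on $[-T_S+\d, 0)$ against the spherical flow started at radius $r_0$ at time $-T_S + \d$ forces $M_t$ to collapse by time $-T_S + \d + L(r_0) < 0$, contradicting the fact that $M_t$ exists throughout $(-T_S, 0)$.

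The only non-routine point is verifying that $\hat{M}_{-T_S}$ inherits weak convexity and the interior sphere condition from the approximating $\hat{M}_t$, but this is the sort of compactness argument made straightforward by the uniform estimates of \Cref{ISC}; the real content of the proof is the combination of the sphere rigidity result with the sharp time comparison built into the quasi-ancient hypothesis.
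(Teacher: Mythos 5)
Your proposal is correct and follows essentially the same route as the paper: verify that $\hat{M}_{-T_S}$ is a weakly convex body in a hemisphere satisfying an interior sphere condition, invoke the Makowski--Scheuer rigidity theorem, and exclude the open-hemisphere alternative by comparing against the lifespan of a geodesic sphere of radius strictly less than $\pi/2$ via the avoidance principle. The paper handles the last step in a brief footnote (a ball $B$ of radius $\rho \le \pi/2 - \e$ with lifespan $T < T_S - c_\e$), whereas you spell it out more explicitly via the lifespan function $L(r)$ and a choice of $\d$ so that the comparison flow starts at a genuine time in $(-T_S,0)$ — a slightly cleaner bookkeeping of the same idea. Likewise, the paper obtains weak convexity of $\hat{M}_{-T_S}$ directly from the fact that an increasing union of convex bodies has convex interior and cites the proof of Lemma 6.1 of \cite{MakowskiScheuer:/2016} for the limiting interior sphere condition, while you outline the compactness arguments yourself; these are equivalent in substance.
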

\pf{
Since the convex bodies $\hat{M_t}$ are increasing backwards in time and due to the uniform convergence of $M_t$ to $M_{-T_S},$ the set
$$\hat{M}_{-T_S}:=\overline{\bigcup_{t<0}\hat{M}_t}$$
is a compact body with
$$\del \hat{M}_{-T_S}=M_{-T_S}.$$
Since $\mrm{int}(\hat{M}_{-T_S})$ is a convex set, it is especially a weakly convex set in a hemisphere. Thus $\hat{M}_{-T_S}$ is a weakly convex body in a hemisphere. The proof of \cite[Lemma 6.1]{MakowskiScheuer:/2016} can literally be applied to show that $\hat{M}_{-T_S}$ satisfies a uniform interior sphere condition as well.
We can apply \cite[Theorem 1.1]{MakowskiScheuer:/2016} and obtain that $\hat{M}_{-T_S}$ is either strictly contained in an open hemisphere or is equal to a closed hemisphere. The first alternative is not possible since the solution is quasi-ancient.{\footnote{On the contrary, suppose $\hat{M}_{-T_S}$ is contained in an open hemisphere, then there exists an open geodesic ball $B$ of radius $\rho\leq \pi/2-\e$, for some $\e>0,$ enclosing $\hat{M}_{-T_S}$. Its boundary sphere $\del B$ has a maximal finite time existence $T<T_S-c_{\e}$ for some constant $c_{\e}$. For all $t>-T_S$ we have $M_t\subset B$. Hence the flow starting from $M_t$ exists only for $-t\leq T.$  Letting $t\to -T_S$ yields a contradiction.}} We conclude that $\del \hat{M}_{-T_S}=M_{-T_S}$ is an equator of $\S^{n+1}.$
}
\section{Aleksandrov Reflection and Classification}\label{sec:reflection}
In this section, we use the result of Theorem \ref{thm:backwardslimit} to classify convex and quasi-ancient solutions of contracting curvature flows on \(\S^{n+1}\) as either equators or shrinking geodesic spheres. The proof uses Aleksandrov reflection as in \cite{BryanIvaki:08/2015, BryanLouie:04/2016} to show that the symmetry of the backwards limit is preserved along the flow. Here we give a very general version with minimal assumptions on the flow: all we require is that the flow limits to an equator at $-T_S$ in \(C^0\), with uniform \(C^2\) bounds, and that the flow is geometric and parabolic, compare \cref{F}.
\subsection{Notation}
We consider $\S^{n+1}$ to be embedded into $\R^{n+2}$ by the standard inclusion.
Let
$$e=e_{n+2}=\br{0,\ldots,0,1}\in \R^{n+2}.$$
For a vector $V\in \S^{n+1}$ let
$H_{V}^{\pm}=\{x\in\R^{n+2}\cn \pm\inpr{x}{V}>0\},$ and
for a set $S\in \R^{n+2}$ define
$S^{\pm}_{V}=S\cap H^{\pm}_{V}.$ We denote the equator in $\S^{n+1}$ around $e$ by
$E=e^{\perp}\cap \S^{n+1}.$
Furthermore, $\d_{V}$ denotes the signed angle that $V$ makes with the hyperplane $e^{\perp},$
$\d_{V}=\arcsin\inpr{V}{-e}.$
The reflection map across the hyperplane $V^{\perp}$ is denoted by
\begin{align*}
R_{V}\cn\R^{n+2}&\ra \R^{n+2}\\
            x&\mt x-2\inpr{x}{V}V,
            \end{align*}
which is an isometry of $\R^{n+2}.$
Finally, for $x\in \S^{n+1}\bs\{\pm e\},$
let $\g_{x}$ denote the unique minimizing geodesic in $\S^{n+1}$ from $e$ to $-e$ that passes through $x$ and then define
\begin{align*}
\pr(x)=\begin{cases} y, & x\in \S^{n+1}\bs\{\pm e\}\\
                E, & x=e~\text{or}~x=-e,\end{cases}
                \end{align*}
where $y\in E$ is the unique element on the image of $\g_{x}$ lying in $E.$

The reader may find it useful to refer to \Cref{fig:reflection} for the arguments in this section.
\begin{figure}[htb]
\centering
\includegraphics[width=.9\linewidth]{./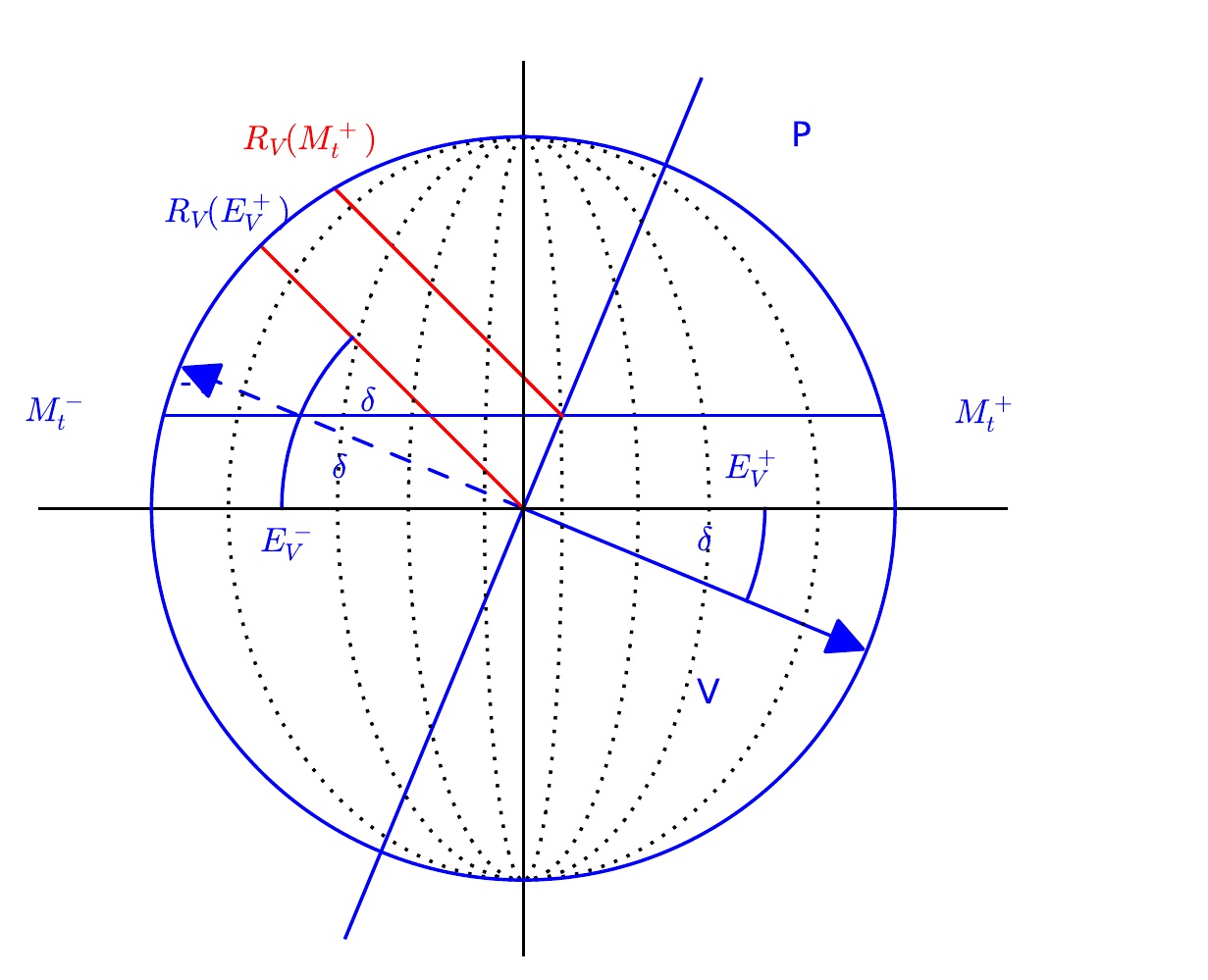}
\caption{Reflection in the $(e, \reflectionvector)$-plane showing the reflected equator, \(M_t\), \(\reflectionmap(M_t)\) and some geodesics through the north pole (dotted lines).}
\label{fig:reflection}
\end{figure}
\subsection{Star-shaped hypersurfaces and one-sided reflections}
\Theo{rem}{starshaped}{
Recall a nonempty set $S\in \S^{n+1}$ to be star-shaped around $e,$ if $\pm e\notin S$ and if every minimizing geodesic from $e$ to $-e$ hits $S$ at most once, i.e.,
$$\forall x\in E\cn \# \br{\mrm{Im}~\g_{x}\cap S}\leq 1.$$
In this case we have a well-defined graph function of $S,$
\begin{align*}f_{S}\cn \pr{S}&\ra \br{-\fr{\pi}{2},\fr{\pi}{2}}\\
            \s&\mt \arcsin\inpr{x_{\s}}{e}, \end{align*}
where $x_{\s}$ is the unique preimage of $\s$ in $S$ under the projection $\pr.$ We obtain a parametrization of the set $S$ via the correspondence
$$x_{\s}=(\s,f_{S}(\s)).$$
}
\Theo{defn}{ReflectsAbove}{
Let $S,~T$ be two star-shaped sets around $e$ with the corresponding functions $f_{S}$ and $f_{T}.$
\begin{enumerate}
  \item We say that \it{S lies above T}, denoted by $S\geq T,$ if
$$\forall \s\in \pr{S}\cap\pr{T}\cn f_{S}(\s)\geq f_{T}(\s).$$
  \item Let $V\in \R^{n+2}$ and suppose $R_{V}(S)$ and $T$ are star-shaped around $e.$ We say that \it{S one-sided reflects above T}, if
$$R_{V}(S^{+}_{V})\geq T^{-}_{V}.$$
\end{enumerate}
}

\Theo{lemma}{Starshaped Reflection}{
Let $V\in\S^{n+1}$ and $0\leq\d_V\leq\d_{0}<\fr{\pi}{4}.$ Then
\begin{description}
  \item[i] $R_{V}(E)$ is star-shaped around $e.$
  \item[ii] There exists a constant $c=c(\d_{0})>0$ with the following property. If a closed and convex hypersurface $M\sub\S^{n+1}$ is star-shaped around $e$ and satisfies
$$|f_{M}|\leq c,$$
then the reflection $R_{V}(M)$ is star-shaped around $e.$
\end{description}
}
\pf{
(i)~Since $R_{V}(E)$ is also an equator, the only way not to be star-shaped around $e$ would be
$e\in R_{V}(E). $
But then, letting $e=R_{V}(x)$ with $x\in E,$ and using the Cauchy-Schwartz inequality, we obtain
$$1=\inpr{R_V(x)}{e }=2\sin\delta_V\inpr{x}{V}\leq 2\sin\delta_V\cos\d_V,$$
which is impossible in the given range of $\d_{V}.$

(ii) Let us put
\[2r=\min\limits_{\{V\cn 0\leq \d_V\leq \d_0\}} \mrm{dist}(e,R_V(E))>0.\]
The minimum is precisely achieved for vectors $V$ with $\d_V=\d_0$ and thus $r$ depends on $\delta_0.$ Suppose $S_{r}(\pm e)$ are the geodesic spheres of radii $r$ with centers at $\pm e.$ Note that $S_{r}(e)$ is contained in the interior of one of the open hemispheres formed by $R_V(E)$, and $S_{r}(-e)$ is contained in the interior of the opposite hemisphere.

Write $E_{\pm\varepsilon}$ for the geodesic spheres at heights $\pm\varepsilon$. By continuity, there exists $c(\d_0)$ such that for all $0<\varepsilon\leq c(\d_0)$, $S_{\frac{r}{2}}(e)$ is contained in the interior of the smallest spherical cap cut by $R_V(E_{\varepsilon})$, and $S_{\frac{r}{2}}(-e)$ is contained in the interior of the smallest spherical cap cut by $R_V(E_{-\varepsilon}).$

Assume that $-c(\d_0)\leq f_M\leq c(\d_0)$. Then using $E_{\pm c(\d_0)}$ as barriers we can see that one of the bodies enclosed by $R_V(M)$ contains \emph{only} $e$ in its interior and the other contains $-e$ in its interior. Now convexity of $R_V(M)$ implies that it is star-shaped around $e.$
}
It is easily seen that the equator $E$ one-sided reflects above itself with respect to $V^{\perp}$, if $0<\d_{V}<\fr{\pi}{4}.$ Now we show that this is also true for any convex hypersurface $M$, which is $C^{1}$-close to $E$.
\Theo{prop}{ReflectedM}{
Suppose $V\in\S^{n+1}$ and $0<\d_{1}\leq\d_{V}\leq \d_{0}<\fr{\pi}{4}$. There exists $\a=\a(\d_{0},\d_{1})>0$ with the following property. If $M$ is a closed and convex hypersurface that is star-shaped around $e$ and
$$0\leq f_{M},\quad |f_{M}|_{C^{1}}\leq\a,$$
then $M$ one-sided reflects above itself.
}
\pf{
Assume without loss of generality that $f_M\leq c(\delta_0),$ where $c(\delta_0)$ is given in the previous lemma.
We will prove
\[R_V(M^{+}_V)\geq M_V^{-}.\]
Suppose there exist $x\in M^{+}_V$ and $y\in M_V^{-}$ with
\[\pr(R_V(x))= \pr(y)\in \pr(R_V(M^{+}_V))\cap\pr(M_V^{-}).\]
Consider the projection map
\begin{align*}
\pr&\cn S_{\varepsilon}\subset \mathbb{R}^{n+2}\ra E\subset \mathbb{R}^{n+1}\\
 				\pr(a)&=\fr{a-\ip{a}{e}e}{|a-\ip{a}{e}e|},
 \end{align*}
where $S_{\varepsilon}$ is the compact set that is made by removing two opposite caps of height $0<\varepsilon<c(\delta_0)$ from the unit sphere at $e,-e$. Since $\pr$ is smooth on $S_{\varepsilon}$, it is Lipschitz and  we have the estimate
\[|\pr(a)-\pr(b)|\leq l_{\varepsilon}|a-b|,\quad\forall a,b\in S_{\varepsilon} \]
 for some constant $0<l_{\varepsilon}<\infty$.
In addition, we have
\begin{align*}
\inpr{x}{e}& = \sin(f_M(\pr(x)),\quad \forall x\in M,\\
2\arcsin\br{\frac{|\pr(a)-\pr(b)|}{2}}&=d(\mrm{pr}(a),\pr(b)),\\
\arcsin(z)&\leq \frac{\pi}{2}z,\quad \forall~ 0\leq z \leq 1.
\end{align*}
Therefore, we obtain
\[
\begin{split}
\inpr{x}{e} - \inpr{y}{e} &\geq -|\sin(f_M(\pr(x))) - \sin(f_M(\pr(y)))|\\
&\geq -  d(\pr(x), \pr(y))|f_M|_{C^1(\mathbb{S}^n)} \\
&= - 2 \arcsin(\frac{1}{2} |\pr(x) - \pr(y)|)|f_M|_{C^1} \\
&\geq - \frac{\pi}{2}|\pr(x) - \pr(y)||f_M|_{C^1}.
\end{split}
\]
Let us put $\alpha=\min\{\frac{2\sin\d_1}{\pi l_{\varepsilon}},c(\delta_0)\}.$ To finish the proof, note that
\begin{align*}\ip{R_V(x)}{e}-\ip{y}{e}&=\ip{R_V(x)}{e}-\ip{x}{e}+\ip{x}{e}-\ip{y}{e}\\
					&\geq 2\ip{x}{V}\sin\d_{V}-\frac{\pi}{2}|\pr(x)-\pr(y)||f_M|_{C^1}
\\                  &=|R_V(x)-x|\sin\d_{V}-\frac{\pi}{2}|\pr(x)-\pr(R_V(x))||f_M|_{C^1}\\
                    &\geq \left(\sin\d_{1}-\frac{\pi}{2}l_{\varepsilon}\alpha\right)|R_V(x)-x|\geq 0
                    \end{align*}
and hence,
\[
\arcsin(\ip{R_V(x)}{e}) \geq \arcsin(\ip{y}{e})
\]
as required in \Cref{ReflectsAbove}.
}
\subsection{Reflecting quasi-ancient curvature flows}
The final aim of this section is to show that for a solution of a curvature flow defined on an interval $(-T_{S},0),$ which converges backwards in time to $E$ in $C^{1}$ and has bounded curvature, the flow hypersurfaces are invariant under reflection about every hyperplane $V^{\perp}$ with $V\in E.$ Hence we somehow need to let $\d_1$ go to zero in the preceding proposition. In fact, until now we have only obtained that for a given $\d_1>0,$ there exists $T_{\d_1}$ such that all flow hypersurfaces in an interval $(-T_{S},T_{\d_1})$ one-sided reflect above themselves. Thus we cannot yet deduce that any flow hypersurface has this property for $\d_1=0.$ To eliminate the dependence of the interval $(-T_{S},T_{\d_1})$ on $\d_1$, we will use the parabolic maximum principle.
\Theo{lemma}{MPReflection}{
Let the closed and convex hypersurfaces $M_{t},$ $-T_{S}<t<0,$ satisfy the parabolic curvature flow equation \eqref{eq:CurvFlow} under either assumption (i) or (ii) in \cref{thm:main} and suppose
\[M_{t}\ra E,\quad t\ra -T_{S},\]
in the sense that $e$ lies in the enclosed convex bodies by $M_{t}$ for all sufficiently small times and the graph functions satisfy
\[0\leq f_{M_t}\quad\text{and}\quad \lim_{t\ra -T_{S}}|f_{M_{t}}|_{C^{1}}=0.\]
 Then there exists $T>-T_{S},$ such that for all $V\in \S^{n+1}$ satisfying
 \[0<\d_{V}\leq \fr{\pi}{8},\]
 $M_{t}$ one-sided reflects above itself for all $t\in (-T_{S},T).$
 }
\pf{
We may assume $M_t$ is not the equator $E$ and that $f_{M_t}\leq \pi/4$ for all $t\leq T_{\ast}.$ By \cref{Starshaped Reflection} and the $C^{1}$-convergence of $M_{t}$ we find a time $T\in (-T_{S}, T_{\ast}],$ such that $M_{t}$ and $R_{V}(M_{t})$ are both star-shaped around $e$ and $e$ also lies in the convex body of $R_V(M_t)$ for all $t\in (-T_{S},T].$ We show that this $T$ has the desired property.

By \cref{ReflectedM}, we know that for a given $V\in \S^{n+1}$ with (suppressing the subscript $V$)
\[0<\d\leq \fr{\pi}{8},\]
 there exists $-T_{S}<\~T_{\d}\leq T_{\ast},$ such that $M_{t}$ one-sided reflects above itself for all $t\in (-T_{S},\~T_{\d}).$ Pick a $-T_{S}<T_{\d}<\~T_{\d}$ and define the domain
\[\O=\bigcup_{t\in(T_{\d},T)}\br{\pr(M_{t}^{-})\cap \pr(R(M_t^{+})}\times \{t\}\sub E\times(-T_{S},T).\]
Recall from \eqref{eq:GraphCurvFlow} that the time dependent radial graph functions of $M_{t}$ satisfy
\begin{align*}\label{FullyNonlinear}
\fr{\del}{\del t}w=-F\br{\fr{\vt'}{v\vt}\d^{i}_{j}+\fr{\vt'}{v^{3}\vt^{3}}\nabla^{i}w\nabla_{j}w-\fr{g^{ik}}{v}\nabla^{2}_{kj}w}v,
\end{align*}
where $\vt=\vt\br{w}=\sin\br{w},$ $w(\cdot,t)=\pi/2-f_{M_t}(\cdot),$ $v^2=1+\vt^{-2}|\nabla w|^2$
and the connection and the norm $|\cdot|$ are those belonging to the spherical metric $\s_{ij}.$ Since the Weingarten operator is invariant under ambient isometries, the reflected hypersurfaces $R_{V}(M_{t})$ with the corresponding graph functions $u$ satisfy the same curvature flow equation.

If the claim of the lemma were false, then there existed a time $t_{\ast}\in (T_{\d},T)$ and a point $x_{\ast}\in \pr(M_{t_{\ast}}^{-})\cap\pr(R(M_{t_{\ast}}^{+})),$ such that
\eq{\label{MPReflection1} u(x_{\ast},t_{\ast})>w(x_{\ast},t_{\ast})\geq \frac{\pi}{4}.}
Formally we would like to apply the parabolic maximum principle to conclude that $u$ must remain below $w,$ since this is the case at the initial time $T_\d$ and also at the boundaries $\del\br{\pr(M_t^{-})\cap\pr(R(M_t^{+}))}\sub\pr\br{V^{\perp}\cap M_t}.\footnote{Note that for two sets $A$ and $B$ we have
 \[\del(A\cap B)\sub \del A \cup \del B.\]
 Since $\pr$ is a diffeomorphism from $M_t$ to $E$ and $M_t$ is star-shaped,
 \[\del(\pr(M_t^{-}))=\pr(\del M_t^{-})= \pr(V^{\perp}\cap M_t)\]
Similarly, $\del(\pr(R(M_t^{+})))=\pr(R(\del M_t^{+}))= \pr(R(V^{\perp}\cap M_t))=\pr(V^{\perp}\cap M_t)$.}$

However, in this situation the application of the standard comparison principles to equations of the form
\[\dot{w}=\Phi(x,w,\nabla w,\nabla^2w)\]
is not straightforward; note to the restriction that $F$ is generally only defined on non-negative definite endomorphisms. Nevertheless, assuming (\ref{MPReflection1}), we obtain a contradiction using the following argument.

A slight rewriting of the second fundamental form of $R(M_t)$ described by the graph function $u$ yields, cf.~\eqref{graph h},
\begin{align*}h^i_j&=\fr{\vt'}{\sqrt{\vt^2+|\nabla u|^2}}\d^i_j+\fr{\vt'}{\br{\vt^2+|\nabla u|^2}^{\fr 32}}\nabla^iu\nabla_ju-\fr{\vt g^{ik}}{\sqrt{\vt^2+|\nabla u|^2}}\nabla^2_{kj}u\\
        &=\fr{\vt g^{ik}}{\sqrt{\vt^2+|\nabla u|^2}}\br{\fr{\vt'}{\vt}g_{kj}+\fr{\vt'}{\vt\br{\vt^2+|\nabla u|^2}}g_{km}\s^{mr}\nabla_ru\nabla_ju-\nabla^2_{kj}u}\\
        &\equiv \fr{\vt(u)g^{ik}}{\sqrt{\vt^2(u)+|\nabla u|^2}}A_{kj}(x,u,u,\nabla u,\nabla^2u),
        \end{align*}
where $\vt=\vt(u)$ and we set
$$A_{kj}(x,z,u,p,r)=\fr{\vt'(z)}{\vt(z)}g_{kj}(p,u)+\fr{\vt'(z)}{\vt(z)\br{\vt^2(z)+|p|^2}}g_{km}(p,u)\s^{mr}p_rp_j-r_{kj}.$$
Here the dependence on $x$ is hidden in $\s^{mr}.$ Moreover, $g_{kj}(p,u)=p_k p_j+\vt^{2}(u)\s_{kj}$ and we artificially introduced $z$ in the argument in order to distinguish increasing behavior from decreasing behavior in $u.$ Since $\cot(z)$ is decreasing in $z$, we have
\begin{align}\label{iv}
A_{ij}\br{x,z_2,u,\nabla u,\nabla^2u}X^iX^j<A_{ij}\br{x,z_1,u,\nabla u,\nabla^2u}X^iX^j
\end{align}
for all $z_1,z_2\in(0,u(x,t))$ with $z_2>z_1$ and all non-zero vectors $X\in TM_t$.

Also write
\begin{align*}
  h^i_j(x,z,u,p,r)&=\fr{\vt(z)g^{ik}(p,z)}{\sqrt{\vt^2(z)+|p|^2}}A_{kj}(x,z,u,p,r).
\end{align*}
Note that if $u$ graphs a convex hypersurface, (\ref{iv}) implies that
\[h^i_j(x,z,u(x,t),\nabla u(x,t),\nabla^2u(x,t))>0,\quad \forall~ 0<z< u(x,t)\]
as a self-adjoint endomorphism and thus $h^i_j$ lies in the domain of $F.$ Write
\[\Phi(x,z,u,p,r)=-F(h^i_j(x,z,u,p,r))\sqrt{1+\vt^{-2}(z)|p|^2},\quad \forall~ 0<z< u(x,t).\]
Choose
 \eq{\label{mon phi}\l>\sup_{(x,t)\in E\times[T_{\delta},T]}\sup_{z\in Q(x,t)}\fr{\del\Phi}{\del z}(x,z,u(x,t),\nabla u(x,t),\nabla^2 u(x,t)),}
 where for $(x,t)\in E\times[T_{\delta},T]$ we defined
 $Q(x,t)=\{z:  \frac{\pi}{4}\leq z\leq u(x,t)\}.$
Such a choice of $\lambda$ is possible since when $z$ is away from $0$, $\Phi$ is smooth.

In view of \eqref{MPReflection1}, there exists $(x_0,t_0)\in\-\O$ not lying on the parabolic boundary of $\O$ such that for
\[\~u=ue^{-\l t}\quad\text{and}\quad \~w=we^{-\l t},\]
there holds
 \begin{align*}
 0<\~u(x_0,t_0)-\~w(x_0,t_0)=\max_{\-\O}(\~u-\~w).
 \end{align*}
Therefore,
\begin{align}\label{is}
[w(x_0,t_0),u(x_0,t_0)]\subset Q(x_0,t_0).
\end{align}
Note that at $(x_0,t_0)$ we have $\dot{\~u}\geq \dot{\~w},$ $\nabla u=\nabla w$ and $\nabla^2u\leq \nabla^2w.$ Hence using (\ref{is}), (\ref{mon phi}) and our choice of $\lambda$, we obtain, at $(x_0,t_0)$, that
\begin{align*}\br{\dot{u}-\Phi(x_0,u,u,\nabla u,\nabla^2u)}e^{-\l t_0}&=\dot{\~u}-\Phi(x_0,u,u,\nabla u,\nabla^2 u)e^{-\l t_0}+\l\~u\\
        &> \dot{\~u}-\Phi(x_0,w,u,\nabla u,\nabla^2 u)e^{-\l t_0}+\l\~w \\
        &\geq \dot{\~w}-\Phi(x_0,w,w,\nabla w,\nabla^2w)e^{-\l t_0}+\l\~w\\
        &=\br{\dot{w}-\Phi(x_0,w,w,\nabla w,\nabla^2 w}e^{-\l t_0},
    \end{align*}
 a contradiction, since both sides are zero in view of the flow equation.
}
\begin{proof}[Proof of \cref{thm:main}]
From \cref{MPReflection} we have \(\reflectionmap(\reflectionset{(M_t)}^+) \geq \reflectionset{(M_t)}^-\) everywhere for all \(t \in (-T_S, -T)\) and any \(\reflectionangle \in (0,\reflectionangle_0)\). By continuity therefore, sending \(\reflectionangle \to 0\) we have \(\reflectionmap(\reflectionset{(M_t)}^+) \geq \reflectionset{(M_t)}^-\) for all \(t \in (-T_S, -T)\)  and any \(\reflectionvector\) satisfying \(\ip{\reflectionvector}{e} = 0\).

Now we need some properties of $\reflectionmap[\reflectionvector]$ following from the fact that $\ip{\reflectionvector}{e} = 0$:
\begin{itemize}
\item $\reflectionmap^2 = \id$,
\item $S \geq T \Rightarrow \reflectionmap(S) \geq  \reflectionmap(T)$,
\item $\reflectionmap = \reflectionmap[-\reflectionvector]$, and
\item $\reflectionset{S}^{\pm} = \reflectionset[-\reflectionvector]{S}^{\mp}$.
\end{itemize}
Thus we obtain
\begin{align*}
\reflectionset{(M_t)}^+ &= \reflectionmap(\reflectionmap(\reflectionset{(M_t)}^+)) \geq \reflectionmap(\reflectionset{(M_t)}^-) \\
&= R_{-V}((M_t)^+_{-V}) \geq \reflectionset[-\reflectionvector]{(M_t)}^-\\
&= \reflectionset{(M_t)}^+.
\end{align*}
So we must have equality all the way through and hence the middle line implies
\[
R_{-V}((M_t)^+_{-V}) = \reflectionset[-\reflectionvector]{(M_t)}^-
\]
for any $\reflectionvector$.

Therefore, \(M_t\) is invariant under \(\reflectionmap\) for any \(\reflectionvector\) satisfying \(\ip{\reflectionvector}{e} = 0\), hence it is a geodesic sphere for every \(t \in (-T_S, -T)\) and thus it is a geodesic sphere for every \(t \in (-T_S, 0)\).
\end{proof}
\section{Non-geometric counterexamples}\label{sec: examples}
The examples in this section highlight the necessity of geometric invariance, without which our methods can not be applied. First, we have a flow that is parabolic - but not geometric - admitting a convex, ancient solution for which \(H\) becomes unbounded. As mentioned earlier, we do not yet know whether such singular behavior can occur for isotropic, geometric flows. Certainly this is not possible whenever a differential Harnack inequality holds and the spherical solutions are ancient as in \cref{cor:boundedH}.
\begin{example}
Let $\bar{\gamma}_t$, evolve by the curve shortening flow in $\mathbb{R}^2$, and let $\gamma_t \subset \mathbb{S}^2$ be the inverse gnomonic projection (see, e.g., \cite{BesauWerner:11/2014}) of \(\bar{\gamma_t}\). The spherical radial functions
\[\rho:\mathbb{S}^1\times[0,T)\to \mathbb{R}\]
evolve by
\begin{equation}\label{eq: angenent oval}
\partial_t\rho(\cdot,t)=-\kappa \frac{\sqrt{\sin^2\rho+\rho_{\theta}^2}}{\sin\rho}\frac{\sin^2\rho+\rho_{\theta}^2}{\tan^2\rho+(\tan\rho)_{\theta}^2}(\cdot,t)
\end{equation}
where, $\kappa(\cdot,t)$ is the curvature of the curve $\gamma_t$ with radial function $\rho(\cdot,t).$

To obtain the evolution equation, in polar coordinates we can express the curvature as follows:
\[\kappa=\frac{-\rho_{\theta\theta}\sin\rho+2\rho_\theta^2\cos\rho+\cos\rho\sin^2\rho}{(\sin^2\rho+\rho_{\theta}^2)^{\frac{3}{2}}}.\]
Write $\bar{\rho}(\cdot,t)$ for the radial function of $\bar{\gamma}_t$. We recall from \cite[p.~8]{BesauWerner:11/2014} that
$\bar{\rho}=\tan\rho.$ Using this formula and the expression of $\kappa$ we can write the curvature of $\bar{\gamma}_t$,  $\bar{\kappa}(\cdot,t),$ as follows:
\[\kappa=\left(\frac{\bar{\rho}^2+\bar{\rho}_{\theta}^2}{(1+\bar{\rho}^2)(\sin^2\rho+\rho_{\theta}^2)}\right)^{\frac{3}{2}}\bar{\kappa}=\left(\frac{\bar{\rho}^2+1}{\bar{h}^2+1}\right)^{\frac{3}{2}}\bar{\kappa}.\]
Here $\bar{h}=\frac{\bar{\rho}^2}{\sqrt{\bar{\rho}^2+\bar{\rho}_{\theta}^2}}$ is the support function of $\bar{\gamma}.$
Therefore,
\begin{align*}
\partial_t\bar{\rho}&=-\bar{\kappa}(1+\bar{\rho}^2)\left(\frac{\bar{\rho}^2+\bar{\rho}_{\theta}^2}{(1+\bar{\rho}^2)(\sin^2\rho+\rho_{\theta}^2)}\right)^{\frac{3}{2}}\frac{\sqrt{\sin^2\rho+\rho_{\theta}^2}}{\sin\rho}\frac{\sin^2\rho+\rho_{\theta}^2}{\tan^2\rho+(\tan\rho)_{\theta}^2}\\
&=-\bar{\kappa}(1+\bar{\rho}^2)^{\frac{3}{2}}\left(\frac{\bar{\rho}^2+\bar{\rho}_{\theta}^2}{(1+\bar{\rho}^2)(\sin^2\rho+\rho_{\theta}^2)}\right)^{\frac{3}{2}}\frac{\sqrt{\sin^2\rho+\rho_{\theta}^2}}{\bar{\rho}}\frac{\sin^2\rho+\rho_{\theta}^2}{\tan^2\rho+(\tan\rho)_{\theta}^2}\\
&=-\bar{\kappa}\frac{\sqrt{\bar{\rho}^2+\bar{\rho}_{\theta}^2}}{\bar{\rho}}.
\end{align*}
The curve shortening flow in $\mathbb{R}^2$ has non-trivial ancient solutions, the Angenent ovals. Thus, there exists a non-spherical, convex, ancient solution to the flow (\ref{eq: angenent oval}). This ancient solution converges backwards in time to a lune (the intersection of two hemispheres) with a pair of antipodal points at which \(\kappa\) is unbounded.
\end{example}
The second example exhibits a (non-geometric) flow for which the symmetry of the backwards limit is not preserved. That is, the backwards limit is an equator, yet the flow is not by geodesic spheres. This example is not as troubling as the first, since the non-geometric nature of the flow prohibits the use of the Aleksandrov reflection technique in \Cref{sec:reflection}, but it does illustrate the necessity of geometric invariance for our methods.
\begin{example}
Using the notation of the previous example, let $\bar{\gamma}_t$ evolve by the affine normal flow in $\mathbb{R}^2:$
\begin{align*}
\partial_t\bar{\rho}&=-\bar{\kappa}^{\frac{1}{3}}\frac{\sqrt{\bar{\rho}^2+\bar{\rho}_{\theta}^2}}{\bar{\rho}}.
\end{align*}
Let \(\gamma_t\) be the inverse gnomonic projection. Then the spherical radial functions evolve by
\[\rho:\mathbb{S}^1\times[0,T)\to \mathbb{R}\]
\begin{equation}\label{eq: ellipse}
\partial_t\tan\rho(\cdot,t)=-\kappa^{\frac{1}{3}} \frac{\sqrt{\sin^2\rho+\rho_{\theta}^2}}{\sin\rho}(\cdot,t).
\end{equation}
Origin-centered ellipses are ancient solutions to the affine normal flow in $\mathbb{R}^2$ (see, for example, \cite{Ivaki:01/2016}). Thus, there exists a non-spherical, convex, ancient solution to the flow (\ref{eq: ellipse}). This solution converges backwards in time to an equator.
\end{example}
\bibliographystyle{amsplain}
\bibliography{Bibliography}
\end{document}